\newtheorem{theorem}{Theorem}
\theoremstyle{myRem}
\theoremstyle{myDef}
\let\originalleft\left
\let\originalright\right
\renewcommand{\left}{\mathopen{}\mathclose\bgroup\originalleft}
\renewcommand{\right}{\aftergroup\egroup\originalright}
\renewcommand{\phi}{\varphi}
\newcommand{\expect}{\mathbb{E}}
\newcommand{\prob}{\mathbb{P}}
\newcommand{\ff}{\mathcal{F}}
\newcommand{\GG}{\mathcal{G}}
\newcommand{\dirichlet}{\text{{\normalfont Dirichlet}}}
\newcommand{\betavar}{\text{\normalfont Beta}}
\newtheorem{cor}{Corollary}
\newtheorem{prop}{Proposition}
\newtheorem{lem}{Lemma}
\newcommand{\ppp}{\mathscr{P}}
\newcommand{\qqq}{\mathscr{Q}}
\newcommand{\ccc}{\mathcal{C}}
\newcommand{\ddd}{\mathcal{D}}
\title{Confidence Sets for the Source of a Diffusion in Regular Trees}
\author{
Justin Khim and Po-Ling Loh\\ \\ 
Department of Statistics\\
The Wharton School\\
Philadelphia, PA 19104 
}
\begin{document}

\maketitle

\begin{abstract}

We study the problem of identifying the source of a diffusion spreading over a regular tree. When the degree of each node is at least three, we show that it is possible to construct confidence sets for the diffusion source with size independent of the number of infected nodes. Our estimators are motivated by analogous results in the literature concerning identification of the root node in preferential attachment and uniform attachment trees. At the core of our proofs is a probabilistic analysis of P\'{o}lya urns corresponding to the number of uninfected neighbors in specific subtrees of the infection tree. We also provide an example illustrating the shortcomings of source estimation techniques in settings where the underlying graph is asymmetric.

\end{abstract}

\section{Introduction}

With the increasing availability of social network data, developing new mathematical tools to analyze the dynamics of information propagation over a network has become an extremely relevant research area. Diffusion models have long been studied in the sciences to model the spread of an epidemic over a large population (e.g., see \cite{KerMcK27, Rap53, Bai75, And82}, and the references cited in Morris~\cite{Mor93}). More recently, the network structure of individuals in the population was introduced in the analysis~\cite{PasVes01, Stro01, New02}.

Given a particular network structure and the identities of the infected individuals, one problem of interest is to determine the original source of the diffusion process~\cite{shah, frieze, zhu2014, bubeck}. In practice, this could indicate the source of information regarding a natural disaster, leaked classified documents, or a pernicious lie. Shah and Zaman~\cite{shah, shah_universal} considered the problem of diffusion over the nodes of a tree, where the time interval between infections of adjacent nodes was modeled by an exponential random variable. They consequently defined a notion of {\it rumor centrality}, which was shown to be equivalent to maximum likelihood estimation in the case of regular trees. Follow-up papers~\cite{fuchs, karamchandani, zhu2014} include generalizations to recursive trees and scenarios where only incomplete information is provided about the subgraph of infected nodes.

Rather than pinpointing a single node as the possible diffusion source, however, a related goal is to construct a confidence set of nodes with a particular probabilistic guarantee of including the root. This set could then be scrutinized more closely by the investigator. Bubeck et al.~\cite{bubeck} studied this problem in the case of random trees generated according to uniform and preferential attachment models~\cite{barabasi1999}. They showed that confidence sets of a fixed size could be constructed for the root node in each of these cases, regardless of the size of the diffusion. Another related line of work, beginning with with Brautbar and Kearns~\cite{brautbar}, examined the capabilities of local algorithms to identify significant nodes in the graph. Borgs et al.~\cite{borgs} and Frieze and Pegden~\cite{frieze} consequently suggested efficient algorithms used to compute the root node of a preferential attachment graph with high probability, assuming that the algorithm recognizes the root upon encountering it.

Our main contribution is to show that in regular trees, confidence sets of fixed size are also sufficient to identify the source of a diffusion. Our work borrows and extends techniques of Bubeck et al.~\cite{bubeck}, leveraging the fact that the analysis of trees and diffusions, which may be viewed as growing trees subject to a constrained structure, are fundamentally quite similar. We also provide a cautionary example showing that regularity (i.e., symmetry) is crucial to the success of the diffusion source estimators proposed in these papers.

The remainder of the paper is organized as follows: In Section~\ref{SecBackground}, we establish notation and describe the estimators to be used in the paper. Section~\ref{SecMain} presents our main results on source estimators that lead to confidence sets. In Section~\ref{SecAsymmetry}, we provide an example illustrating the shortcomings of the source estimators when the underlying graph is not regular. Section~\ref{SecProofs} contains proofs of our main results, and Section~\ref{SecDiscussion} concludes the paper with several interesting directions for future work.


\section{Background and Estimators}
\label{SecBackground}

Let \(\{T_{n}\}_{n = 1}^{\infty}\) denote the sequence of subtrees obtained from a diffusion process on a regular tree \(\GG\), where the number of neighbors of each node is denoted by $d$. The vertices of \(T_{n}\) are labeled $\{1, 2, \dots, n\}$, according to their order of arrival, and vertex \(n + 1\) is chosen uniformly at random from the neighbors of \(V(T_{n})\) in \(V(\GG) \setminus V(T_{n})\). As noted by Shah and Zaman~\cite{shah}, this model of diffusion may also be viewed as a contagion process over the tree, where the amount of time for the disease to propagate across each edge is exponentially distributed with a certain fixed parameter.

We write \((T, u)\) to denote the tree \(T\) rooted at vertex \(u\), and we write \((T, u)_{v \downarrow}\) to denote the subtree rooted at \(v\) of the rooted tree \((T, u)\). We also denote unlabeled trees using \(\circ\); i.e., \(T^{\circ}\) is a tree with the same topology as \(T\), but with unlabeled vertices. The degree of vertex \(v\) is denoted by \(d_{\GG}(v)\). We will often abuse notation and write \(|T|\) to refer to \(|V(T)|\), the number of vertices in \(T\).

One special property of regular trees is that, given a source node \(1\), 
each realization of \(T_{n}\) has the same probability.
Thus, the likelihood function is proportional to the number of ways in which the diffusion can occur.
This is exactly the quantity
\begin{equation}
R_{T_{n}}(u)
= n! \prod_{v \in T_{n}} \frac{1}{|(T_{n}, u)_{v \downarrow}|}.
\label{eqn: rumor centrality}
\end{equation}
The vertex \(v\) which maximizes \(R_{T_{n}}\) is called the {\it rumor center}
of \(T_{n}\), and it is the maximum likelihood estimate of the source of the diffusion. Conveniently, the rumor center may be calculated in $O(n)$ time via a message-passing algorithm, as detailed in Shah and Zaman~\cite{shah}. We also define the function
\begin{equation}
\phi_{T}(u) 
= \prod_{v \in V(T) \setminus \{u\}} |(T, u)_{v \downarrow}|.
\label{eqn: subtree prod}
\end{equation}
Clearly, minimizing \(\phi_{T_n}\) is equivalent to maximum likelihood estimation. Notably, the estimator~\eqref{eqn: subtree prod} was also studied in Bubeck et al.~\cite{bubeck} and shown to generate a confidence set of constant size in the uniform attachment model, even though it does not represent a maximum likelihood estimator in that case. As shown in Shah and Zaman~\cite{shah}, maximizing the function~\eqref{eqn: rumor centrality} is also equivalent to maximizing the \emph{distance} or \emph{closeness centrality} of a node in the graph, a notion that may be extended beyond trees~\cite{Jac08}. We will also refer to $\varphi_T$ as the subtree product estimator.

We will also analyze the following alternative estimator:
\begin{equation}
\psi_{T}(u) = \max_{v \in V(T) \setminus \{u\}} |(T, u)_{v \downarrow}|,
\label{eqn: max estimator}
\end{equation}
which may be viewed as a relaxation of $\varphi_T(u)$. Note that $\psi_T$ may also be computed in $O(n)$ time. However, the proofs for \(\psi_{T}\) tend to be much easier, as demonstrated in Bubeck et al.~\cite{bubeck}. We will refer to $\psi_T$ as the maximum subtree estimator. The minimizer of $\psi_T$ is also known in the graph theory literature as the \emph{centroid} of the tree~\cite{Ore65}.

Our overall goal is to understand how the size of the required confidence set evolves as a function of the error tolerance and the number of infected nodes. In the results that follow, we will use $H_{\varphi, K}$ and $H_{\psi, K}$ to denote the sets obtained by selecting the $K$ nodes with smallest values of $\varphi_T$ and $\psi_T$, respectively. Thus, we wish to determine the size of $K$ required to ensure that node 1 is contained in $H$ with sufficiently high probability.

\section{Main Results}
\label{SecMain}

In this section, we present theoretical guarantees for the two estimators discussed in the previous section.
The proofs of the theorems may be found in Section~\ref{SecProofs}.

The results in this section assume \(d \geq 3\). When \(d = 2\), the underlying network is a line graph, and the source estimators \(\psi_T\) and \(\phi_T\) are both minimized by selecting the set of nodes at the center of the infection tree. However, it can easily be seen by considering a simple random walk that constructing a confidence set of a fixed size is impossible in this case.


\subsection{Maximum Subtree Analysis}
\label{section: max subtree}

Let \(H_{\psi, K}\) be the function which takes a tree \(T\) and returns the set of \(K\) vertices in the tree with the smallest \(\psi_{T}\) values.
This is the maximum subtree estimator, 
and it yields a confidence set for the source, as stated in the following theorem:

\begin{theorem}
Let \(\{T_{n}\}\) be a diffusion on a \(d\)-regular tree \(\GG\) with \(d \geq 3\).
Then for any \(\eta \in (0, 1)\) and \(K > 3\), we have
\[
\limsup_{n \to \infty} \prob\left\{1 \not \in H_{\psi, K}(T_{n}^{\circ})\right\}
\leq C_{1} \eta^{1 + \frac{1}{d - 2}} + C_{2} K^{2 + \frac{1}{d - 2}} (1 - \eta)^{K - 1 + \frac{1}{d - 2}},
\]
for some constants \(C_{1}\) and  \(C_{2}\), depending on $d$ but not $K$.
\label{thm: max subtree regular tree}
\end{theorem}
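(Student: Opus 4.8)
The plan is to reduce the event $\{1\notin H_{\psi,K}(T_n^\circ)\}$ to a statement about the sizes of a nested family of subtrees hanging off node $1$, and then to estimate the relevant probabilities using the P\'olya urn that governs how the infection mass splits among those subtrees. Root $T_n$ at node $1$ and write $\sigma(u)=|(T_n,1)_{u\downarrow}|$, with the convention $\sigma(1)=n$; then $\psi_{T_n}(1)=\max_{u\sim 1}\sigma(u)=:\sigma_1$ is the size of the largest branch. For any $u\neq 1$ the branch at $u$ pointing toward node $1$ has size $n-\sigma(u)$, so $\psi_{T_n}(u)\geq n-\sigma(u)$; conversely, when $u$ lies inside (or is the root of) the largest branch, every other branch at $u$ has size strictly below $\sigma_1$, so $\psi_{T_n}(u)\leq\psi_{T_n}(1)$ exactly when $\sigma(u)\geq n-\sigma_1$, whereas for $u$ in any other branch one always has $\psi_{T_n}(u)>\psi_{T_n}(1)$ since $d\geq 3$ leaves at least two competing branches at node $1$. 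Hence, up to the (limiting-probability-zero) effect of ties,
\[
\{1\notin H_{\psi,K}(T_n^\circ)\}\;=\;\bigl\{\#\{u:\sigma(u)\geq n-\sigma_1\}\geq K\bigr\},
\]
and the vertices counted here form a connected ``core'' of the largest branch reaching up to node $1$. In particular the bad event forces a path $1=u_0,u_1,\dots,u_{K-1}$ with $\sigma(u_j)\geq\theta:=n-\sigma_1$ for every $j$, equivalently $\sigma_1\geq n/2$ and $\prod_{j=1}^{K-2}R_j\geq\theta/\sigma_1$ where $R_j:=\sigma(u_{j+1})/\sigma(u_j)\in(0,1)$; the core can branch only when $\sigma_1$ is a large fraction of $n$, which will be folded into the first case below.

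\textbf{The urn.} A connected infected branch on $k$ vertices of the $d$-regular tree $\GG$ has exactly $(d-2)k+1$ uninfected neighbours, and adding a vertex to it raises this count by $d-2$; so, after rescaling by $d-2$, competing branches evolve as a classical P\'olya urn in which a branch of current size $k$ carries weight $k+\tfrac1{d-2}$. The cleanest instance is the edge $\{1,2\}$: at time $2$ each side of this edge holds exactly one infected vertex, so from then on the two sides grow as a two-colour P\'olya urn started from weights $\bigl(1+\tfrac1{d-2},\,1+\tfrac1{d-2}\bigr)$, and therefore the fraction $A_n/n$ of infected vertices on node $1$'s side of $\{1,2\}$ converges almost surely to a $\betavar\!\bigl(1+\tfrac1{d-2},\,1+\tfrac1{d-2}\bigr)$ variable. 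Iterating the same splitting along a root-to-leaf path, and controlling the short birth-time transient at each interior vertex (which does not move the limiting parameters), gives the joint limiting law of $(\sigma_1/n,R_1,R_2,\dots)$ as functions of independent Beta-type variables, each with a parameter equal to $1+\tfrac1{d-2}$ or $\tfrac1{d-2}$.

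\textbf{The probability estimate.} Fix $\eta\in(0,1)$ and split according to whether node $1$'s side of $\{1,2\}$ is atypically small. The event $A_n<\eta n$ has limiting probability at most $\prob\{\betavar(1+\tfrac1{d-2},1+\tfrac1{d-2})<\eta\}\leq C_1\eta^{1+\frac1{d-2}}$, the first term. On the complementary event (after, if necessary, a parallel reduction handling the case in which the dominant branch of node $1$ is not the one containing node $2$), the path condition $\prod_{j=1}^{K-2}R_j\geq\theta/\sigma_1$ forces a product of $\Theta(K)$ independent ratios to exceed a quantity bounded below by a function of $\eta$ alone; since each $1-R_j$ has a density vanishing at $0$ to the appropriate order, this is a small-ball event whose probability decays like $(1-\eta)^K$, with a polynomial-in-$K$ prefactor from the union bound over which child of $u_j$ is heavy at each of the $K$ levels (and over the finitely many off-path core vertices) and from the $\tfrac1{d-2}$-shifts in the Beta parameters. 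Assembling these contributions yields $C_2K^{2+\frac1{d-2}}(1-\eta)^{K-1+\frac1{d-2}}$, and taking $\limsup_n$ while invoking the almost-sure convergence of all the ratios completes the proof.

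\textbf{Main obstacle.} The crux is the second case: one must control a product of \emph{dependent} ratios along a \emph{random} path (the Beta variables at successive levels are only conditionally independent), extract the $(1-\eta)^K$ decay with precisely the exponent $K-1+\tfrac1{d-2}$ while losing only a polynomial factor in $K$, union-bound over all admissible cores rather than a single path, and dispose of the case in which the dominant branch of node $1$ is not the one containing node $2$. Making the distributional claims exact --- in particular showing that the interior birth-time transients leave the limiting Beta parameters at $1+\tfrac1{d-2}$ --- and transferring finite-$n$ estimates past the conditioning on rare events are the remaining technical points.
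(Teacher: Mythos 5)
Your reduction is genuinely different from the paper's, and as sketched it has two real gaps. First, the passage from ``at least $K$ vertices $u$ with $\sigma(u)\ge n-\sigma_1$'' to ``a path $1=u_0,\dots,u_{K-1}$ with $\sigma(u_j)\ge n-\sigma_1$'' is not valid: the ancestor-closed core can branch whenever two sibling subtrees each exceed $n-\sigma_1$, which only requires $\sigma_1\ge n/2$. Your first case covers $\sigma_1\ge(1-\eta)n$ (equivalently $A_n\le \eta n$ up to the symmetric case), so for the regime that actually matters in the theorem and in Corollary~\ref{CorConfidence} --- $\eta$ small --- branching is \emph{not} ``folded into the first case.'' On the complement event the core has at most $1/\eta$ leaves, so you only get a path of length of order $\eta K$, which destroys the exponent $K-1+\tfrac{1}{d-2}$; the alternative of a union bound over all branched core shapes is exactly the hard work you defer. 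Second, the small-ball mechanism you invoke is wrong in its details: along a fixed descending path the ratios are asymptotically $\betavar\bigl(\tfrac{1}{d-2},1\bigr)$ (only the first factor has the extra $(1-x)^{1/(d-2)}$ vanishing), so the per-path event $\prod_j R_j\ge \eta/(1-\eta)$ is a Gamma lower-tail event with probability of order $\bigl(\log\tfrac{1-\eta}{\eta}\bigr)^{K-2}/(K-2)!$, not $(1-\eta)^{K}$; and since you union over geometric paths there are order $(d-1)^{K}$ of them, an exponential factor the paper never incurs. A repaired version of your argument can plausibly give a bound strong enough for the corollary (the factorial decay beats the exponential path count when $K\gg\log(1/\eta)$), but it does not, as written, produce the stated bound $C_1\eta^{1+\frac{1}{d-2}}+C_2K^{2+\frac{1}{d-2}}(1-\eta)^{K-1+\frac{1}{d-2}}$, and the distributional inputs you rely on (joint law and conditional independence of the ratios along a random path, treatment of ties, the case where the heavy branch does not contain vertex $2$) are asserted rather than proved.

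For comparison, the paper avoids all of this by indexing through arrival times rather than geometry: if $1\notin H_{\psi,K}(T_n^\circ)$ then some vertex $i>K$ satisfies $\psi_{T_n}(i)\le\psi_{T_n}(1)$; one bounds $\psi_{T_n}(1)\le\max(|T^n_{1,2}|,|T^n_{2,2}|)$ and, for every $i>K$, $\psi_{T_n}(i)\ge\min_{k\le K}W^n_{k,K}$ where $W^n_{k,K}=\sum_{j\ne k}|T^n_{j,K}|$ and $T^n_{j,K}$ are the components of the forest obtained by cutting the edges among the first $K$ infected vertices. A single P\'olya-urn argument (Lemma~\ref{LemDirichlet}) gives the Dirichlet limit of $(|T^n_{1,K}|/n,\dots,|T^n_{K,K}|/n)$; the first term is a $\betavar\bigl(\tfrac{d-1}{d-2},\tfrac{d-1}{d-2}\bigr)$ tail giving $C_1\eta^{1+\frac{1}{d-2}}$, and the second is a union bound over only $K$ indices with a $\betavar(c_{1,k},c_{2,k})$ lower tail, $c_{1,k}\ge K-1+\tfrac{1}{d-2}$, which is where the exponent $K-1+\tfrac{1}{d-2}$ and the prefactor $K^{2+\frac{1}{d-2}}$ come from. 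If you want to salvage your route, you would need to carry out the union over branched cores and establish the joint Beta structure along paths (in the spirit of Lemma~\ref{LemBetaConv}); but be aware this buys you nothing over the paper's argument for this theorem and is substantially more delicate.
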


\noindent The proof of Theorem~\ref{thm: max subtree regular tree} is contained in Section~\ref{SecThmSubtree}.

As \(d \to \infty\), the bound obtained in Theorem \ref{thm: max subtree regular tree} converges to the analogous bound for uniform attachment attachment trees derived in Bubeck et al.~\cite{bubeck}.
Intuitively, this is due to the fact that as far as the estimator \(\psi\) is concerned, regular trees of high degree behave like diffusions on uniform attachment trees.

The result of Theorem~\ref{thm: max subtree regular tree}, along with some algebra, implies a sufficient condition on the size of a $1-\epsilon$ confidence set. The proof of the following corollary is provided in Appendix~\ref{AppCorConfidence}.

\begin{cor}
\label{CorConfidence}
For sufficiently small $\epsilon > 0$, we have
\begin{equation*}
\limsup_{n \rightarrow \infty} \prob \left\{1 \notin H_{\psi, K} (T^\circ_n)\right\} \le \epsilon,
\end{equation*}
provided $K \ge \frac{C}{\epsilon}$, where $C > 0$ is a constant depending on $d$ but not $\epsilon$.
\end{cor}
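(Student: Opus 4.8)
The plan is to invoke Theorem~\ref{thm: max subtree regular tree} with a carefully chosen value of the free parameter $\eta$, expressed as a function of $\epsilon$, and then to control each of the two terms in the resulting bound separately, showing that each is at most $\epsilon/2$ once $K \ge C/\epsilon$ and $\epsilon$ is small. Throughout I would use the identity $1 + \frac{1}{d-2} = \frac{d-1}{d-2}$ to keep the exponents readable.

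First I would set $\eta = \eta(\epsilon) := \left(\tfrac{\epsilon}{2C_1}\right)^{(d-2)/(d-1)}$, which is the value making the first term of the bound equal to exactly $\epsilon/2$, since $C_1\eta^{(d-1)/(d-2)} = \epsilon/2$. For all sufficiently small $\epsilon$ this $\eta$ lies in $(0,1)$, and $K \ge C/\epsilon > 3$ as well, so the hypotheses of the theorem are satisfied. It then remains to bound the second term $C_2 K^{2 + \frac{1}{d-2}}(1-\eta)^{K - 1 + \frac{1}{d-2}}$ by $\epsilon/2$.

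For that term I would write $a := 2 + \tfrac{1}{d-2}$ and use the elementary inequality $(1-\eta)^m \le e^{-\eta m}$ to obtain
\[
C_2 K^{2 + \frac{1}{d-2}}(1-\eta)^{K - 1 + \frac{1}{d-2}} \le C_2\, e^{\eta}\, K^{a}\, e^{-\eta K}.
\]
The function $x \mapsto x^{a} e^{-\eta x}$ is decreasing for $x > a/\eta$. Since $\eta = \Theta\!\big(\epsilon^{(d-2)/(d-1)}\big)$ while $C/\epsilon = \Theta(\epsilon^{-1})$ and $(d-2)/(d-1) < 1$, we have $C/\epsilon \ge a/\eta$ for every sufficiently small $\epsilon$; hence for all $K \ge C/\epsilon$ the right-hand side above is at most its value at $K = C/\epsilon$. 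Substituting $K = C/\epsilon$ and the chosen $\eta$, one computes $\eta K = \Theta\!\big(\epsilon^{-1/(d-1)}\big)$, so this value is a fixed polynomial in $1/\epsilon$ (of degree $a$) times $\exp\!\big(-\Theta(\epsilon^{-1/(d-1)})\big)$, which tends to $0$ as $\epsilon \to 0$ and is therefore $\le \epsilon/2$ once $\epsilon$ is small enough. Adding the two bounds yields $\limsup_{n\to\infty}\prob\{1 \notin H_{\psi,K}(T_n^\circ)\} \le \epsilon$, as claimed; the constant $C$ and the smallness threshold for $\epsilon$ depend only on $d$, $C_1$, $C_2$.

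The main obstacle is that the bound in Theorem~\ref{thm: max subtree regular tree} is not monotone in $K$ (it contains the growing factor $K^{2 + \frac{1}{d-2}}$), so one cannot simply plug in $K = C/\epsilon$ and must instead argue that $K = C/\epsilon$ already lies past the mode $a/\eta$ of $x^a e^{-\eta x}$, which is exactly where the comparison $1/\epsilon \gg \eta^{-1}$ enters. A secondary, purely bookkeeping point is that ``sufficiently small $\epsilon$'' is the conjunction of finitely many threshold conditions ($\eta < 1$, $C/\epsilon > \max\{3, a/\eta\}$, and the final polynomial-versus-exponential comparison), each depending only on $d$ and the theorem's constants, so the statement is of the asserted form.
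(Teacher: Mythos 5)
Your proposal is correct and takes essentially the same route as the paper: the same choice $\eta = \left(\frac{\epsilon}{2C_1}\right)^{\frac{d-2}{d-1}}$ to make the first term of Theorem~\ref{thm: max subtree regular tree} equal to $\epsilon/2$, and the same elementary bound $(1-\eta)^m \le e^{-\eta m}$ (the paper's $\log(1-x)\le -x$) for the second term, the key point in both arguments being that $\eta K = \Theta\bigl(\epsilon^{-1/(d-1)}\bigr) \to \infty$ when $K \ge C/\epsilon$. The only difference is cosmetic: the paper splits the resulting logarithmic inequality into two monotone conditions yielding explicit constants, whereas you close the argument via the unimodality of $x^{a}e^{-\eta x}$ and an asymptotic comparison, which does give $\le \epsilon/2$ since the stretched-exponential factor decays super-polynomially (i.e., faster than $\epsilon$ itself, a point worth stating explicitly rather than just ``tends to $0$'').
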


In particular, the required size $K$ of a $1-\epsilon$ confidence set is bounded by a function of $\epsilon$ that does not depend on $n$, the number of infected nodes.



\subsection{Subtree Product Analysis}
\label{section: subtree product}

For the function \(\phi_{T}\),
we first find a bound on the probability that 
the rumor center is far from the source of the diffusion.
This could be applied directly in cases where we wish to find a starting point to query nodes
in search of the source. Furthermore, the fact that we may obtain a confidence set of bounded size for the source follows as an immediate corollary. The proof of the following theorem is provided in Section~\ref{SecThmRumor}.

\begin{theorem}
Let \(\{T_{n}\}\) be a diffusion on a \(d\)-regular tree \(\GG\) with \(d \geq 3\),
and let \(v\) be a vertex in \(\GG\).
Define \(\ell(v)\) to be the length of the path between \(1\) and \(v\).
Then for $\ell(v) > 1$, we have
\[
\limsup_{n \to \infty } 
\prob\left\{\phi_{T_{n}}(v) \leq \phi_{T_{n}}(1)\right\}   
\leq 
7 \exp\left(-\frac{\ell(v)}{2} \log\left(\min\left\{\frac{\ell(v)(d-2)}{4e \log(\ell(v))}, \frac{\ell(v)}{2}\right\}\right) \right).
\]
\label{thm: inv rumor centrality distance}
\end{theorem}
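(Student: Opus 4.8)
The plan is to compare $\phi_{T_n}(v)$ and $\phi_{T_n}(1)$ by studying how the ratio $\phi_{T_n}(v)/\phi_{T_n}(1)$ telescopes along the path from $1$ to $v$. Write the path as $1 = w_0, w_1, \dots, w_{\ell} = v$. There is a standard identity (used in Shah–Zaman and in Bubeck et al.) expressing the ratio $\phi_{T}(w_{i+1})/\phi_T(w_i)$ in terms of the two pieces into which the edge $\{w_i, w_{i+1}\}$ splits the tree: if $S_i$ denotes the size of the component containing $w_{i+1}$ when that edge is removed, then $\phi_T(w_{i+1}) \le \phi_T(w_i)$ is essentially the event that $S_i \ge n/2$ (up to lower-order corrections). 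So the event $\{\phi_{T_n}(v) \le \phi_{T_n}(1)\}$ is contained in the event that the subtree sizes hanging off the far side of the path behave atypically, and I would reduce the theorem to controlling the joint lower tails of the normalized component sizes $S_i/n$ for $i = 0, \dots, \ell-1$.

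The second step is the probabilistic core: understanding the limiting behavior of these normalized sizes. As the authors flag in the abstract, the right tool is a Pólya urn embedding — the number of uninfected neighbors available in each subtree along the path evolves like a (generalized) Pólya urn, so the vector of normalized subtree sizes converges almost surely to a vector with a Dirichlet-type limiting law whose parameters involve $d-2$ (each node along the path contributes $d-2$ "free" neighbors, which is why the exponent $\frac{1}{d-2}$ appears). I would set up the urn carefully, identify the limiting Dirichlet parameters, and then the probability in question becomes (in the limit) the probability that a Dirichlet random vector, or a product of independent Beta variables obtained by the stick-breaking representation, puts too much mass past the midpoint at every one of the $\ell$ cut edges simultaneously. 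Because the path has $\ell$ edges and crossing each successive midpoint is a roughly independent Beta-tail event, the probability decays geometrically in $\ell$, and optimizing the Beta tail bound (a standard estimate of the form $\prob\{\betavar(a,b) \ge 1-t\} \lesssim \text{something}^{a}$) produces the $\exp\left(-\frac{\ell}{2}\log(\cdots)\right)$ shape, with the $\min$ inside the logarithm coming from the two competing regimes in the Beta-tail bound (the $\frac{\ell(d-2)}{4e\log \ell}$ term from the genuine large-deviation regime and the $\frac{\ell}{2}$ term from a crude union bound when $d$ is small relative to $\ell$).

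The main obstacle I anticipate is making the reduction in the first step rigorous in the $n \to \infty$ limit: the identity relating $\phi_T$ ratios to subtree sizes carries $O(1)$ correction terms (from the $+1$'s in $|(T,u)_{v\downarrow}|$), and one must argue these do not interfere with the tail event, presumably by showing that each relevant subtree is of size $\Theta(n)$ almost surely so the corrections are negligible — this requires a separate argument that the urn fractions are bounded away from $0$. A secondary technical point is handling the dependence among the $\ell$ cut-edge events: they are not exactly independent, so I would use the stick-breaking / sequential conditioning structure of the Dirichlet limit to bound the joint probability by a product of conditional Beta tails, each of which is controlled uniformly. Finally, I would need to justify exchanging the limit in $n$ with these urn limits (almost-sure convergence of the urn proportions plus a continuity/portmanteau argument at the boundary of the relevant region), and absorb all $d$-dependent but $\ell$-independent multiplicative factors into the constant $7$.
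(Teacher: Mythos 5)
Your first two steps line up with the paper: the exact re-rooting identity along the path (the ratio of consecutive $\phi$-values is exactly $\frac{n-S_i}{S_i}$, so there are in fact no $O(1)$ correction terms to worry about), and the P\'{o}lya-urn/stick-breaking limit giving normalized subtree sizes as products of independent Beta variables with parameter $\frac{1}{d-2}$ (this is the paper's Lemma~\ref{LemBetaConv}, plus a stochastic domination step to make all the Betas $\betavar\left(\frac{1}{d-2},1\right)$). The gap is in how you identify the event. The event $\{\phi_{T_n}(v)\le\phi_{T_n}(1)\}$ is the \emph{product} condition $\prod_{i=1}^{\ell}\frac{n-S_i}{S_i}\le 1$, and it is \emph{not} contained in the intersection of the per-edge midpoint events $\bigcap_i\{S_i\ge n/2\}$: with $\ell=2$, $S_1=0.9n$, $S_2=0.3n$ one has $\frac{0.1}{0.9}\cdot\frac{0.7}{0.3}<1$ even though $S_2<n/2$. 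So bounding the probability by the probability that "the mass crosses the midpoint at every one of the $\ell$ cut edges simultaneously" bounds a smaller event, and the inequality goes the wrong way; the subsequent "roughly independent Beta-tail events, geometric decay in $\ell$" computation therefore does not control the quantity in the theorem.

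What is missing is a genuine treatment of the product-form limiting event $\prod_{i=1}^{\ell}\left(1-\prod_{k\le i}B'_k\right)\le\prod_{i=1}^{\ell}\prod_{k\le i}B'_k$. The paper handles it by inserting a threshold $e^{-t}$ and bounding the two resulting tails separately: the left tail $\prob\left\{\prod_i\left(1-\prod_{k\le i}B'_k\right)\le e^{-t}\right\}\le 6\cdot 2^{\ell/4}e^{-t/4}$ via a dedicated concentration lemma for $\prod_i\min\left(-\sum_{k\le i}\log B'_k,\,1\right)$, and the right tail $\prob\left\{\prod_i\prod_{k\le i}B'_k\ge e^{-t}\right\}$ via a Chernoff bound on the weighted sum $\sum_{i=1}^{\ell}(1+\ell-i)\left(-\log B'_i\right)$, whose triangular weights (summing to about $\ell^2/2$) are precisely what produce the $\frac{\ell}{2}$ in the exponent; choosing $t=2\ell\log\ell$ then yields the two competing terms inside the $\min$. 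Without this (or some equivalent analysis of the product event rather than simultaneous midpoint crossings), your argument does not reach the stated bound.
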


From here, we can obtain a confidence set for the source by selecting all nodes
sufficiently close to the rumor center. This is captured in the following corollary, proved in Appendix~\ref{AppCorPhiEst}:

\begin{cor}
\label{CorPhiEst}
Let \(\{T_{n}\}\) be a diffusion on a \(d\)-regular tree \(\GG\) with \(d \geq 3\).
Define \(H_{\phi, L}(T)\)
to be the set of all vertices \(u\) in \(T\) such that
the distance between \(u\) and the rumor center of \(T\) is less than or equal to \(L\).
Then for $L \ge 2$, we have
\begin{equation}
\label{EqnConfWidth}
\limsup_{n \to \infty}
\prob\left\{1 \not \in H_{\phi, L}(T_{n}^{\circ})\right\}
\leq 
7 \exp\left(-\frac{L}{2} \log\left(\min\left\{\frac{L(d - 2)}{4 e d^{2} \log(L)}, \frac{L}{2d^2}\right\}\right) \right).
\end{equation}
In particular, for \(\epsilon > 0\) sufficiently small, we have
\[
\limsup_{n \to \infty}
\prob\left\{1 \not \in H_{\phi, L}(T_{n}^{\circ})\right\}
\leq 
\epsilon,
\]
provided  
\(
L \geq \frac{a\log\left(\frac{7}{\epsilon}\right)}{\log\left(b \log\left(\frac{7}{\epsilon}\right) \right)},
\)
where $a$ and $b$ are constants depending only on \(d\).
\label{cor: rc estimating set}
\end{cor}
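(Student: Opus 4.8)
The plan is to reduce the event $\{1 \notin H_{\phi, L}(T_n^\circ)\}$ to a union of events of the form $\{\phi_{T_n}(v) \leq \phi_{T_n}(1)\}$ over a collection of vertices whose size does not depend on $n$, and then to control each such event with Theorem~\ref{thm: inv rumor centrality distance}. The bridge is an elementary identity: if $v$ and $w$ are adjacent in $T_n$ and $s = \bigl|(T_n, v)_{w\downarrow}\bigr|$ is the size of the subtree hanging off $w$ when $T_n$ is rooted at $v$, then only the factor along the edge $vw$ changes in~\eqref{eqn: subtree prod}, so $\phi_{T_n}(w)/\phi_{T_n}(v) = (n-s)/s$. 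Hence $\phi_{T_n}$ strictly decreases toward a neighbor precisely when that neighbor's subtree has size exceeding $n/2$, and at most one neighbor qualifies. Using this, I would first check that $\phi_{T_n}$ is non-increasing along the path from $1$ to the rumor center $\hat c$ (the minimizer of $\phi_{T_n}$): were it to increase across an edge $u_i u_{i+1}$ of this path, the side of that edge not containing $\hat c$ (which then has size $> n/2$) would sit inside the subtree of $\hat c$ pointing back toward $u_i$, so that subtree would have size $> n/2$, contradicting minimality of $\phi_{T_n}(\hat c)$ via the identity. On the event $1 \notin H_{\phi, L}(T_n^\circ)$ we have $\ell(\hat c) \geq L + 1$, so the path from $1$ to $\hat c$ contains a vertex $v^\star$ with $\ell(v^\star) = L + 1$, and monotonicity gives $\phi_{T_n}(v^\star) \leq \phi_{T_n}(1)$. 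Therefore
\[
\bigl\{1 \notin H_{\phi, L}(T_n^\circ)\bigr\} \;\subseteq\; \bigcup_{v\,:\,\ell(v) = L+1}\bigl\{\phi_{T_n}(v) \leq \phi_{T_n}(1)\bigr\},
\]
a union over the $d(d-1)^{L}\leq d^{L+1}$ vertices of $\GG$ at distance exactly $L+1$ from $1$, a count that does not grow with $n$.

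Next I would take $\limsup_n$, pass it through this finite union, and apply Theorem~\ref{thm: inv rumor centrality distance} to each term with $\ell(v) = L+1 > 1$, obtaining
\[
\limsup_{n\to\infty}\prob\bigl\{1\notin H_{\phi,L}(T_n^\circ)\bigr\} \;\leq\; d^{L+1}\cdot 7\exp\left(-\frac{L+1}{2}\log\left(\min\left\{\frac{(L+1)(d-2)}{4e\log(L+1)},\;\frac{L+1}{2}\right\}\right)\right).
\]
Writing $d^{L+1} = \exp\bigl(\tfrac{L+1}{2}\log d^2\bigr)$ and absorbing it into the bracket divides both arguments of the minimum by $d^2$. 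Finally, since the map $L\mapsto \tfrac{L}{2}\log\min\{\tfrac{L(d-2)}{4ed^2\log L},\,\tfrac{L}{2d^2}\}$ is nondecreasing on the range where the minimum is at least $1$ --- and on the complementary range~\eqref{EqnConfWidth} is vacuously true, its right-hand side exceeding $7$ --- replacing $L+1$ by $L$ throughout only increases the right-hand side, which yields exactly~\eqref{EqnConfWidth}.

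For the last assertion, it remains to solve $7\exp\bigl(-\tfrac{L}{2}\log h(L)\bigr)\leq\epsilon$, i.e.\ $\tfrac{L}{2}\log h(L)\geq\log(7/\epsilon)$, where $h(L)=\min\{\tfrac{L(d-2)}{4ed^2\log L},\,\tfrac{L}{2d^2}\}$. For $\epsilon$ small the required $L$ is large, and for $L$ large the first term of the minimum is the smaller one, so it suffices to arrange $\tfrac{L}{2}\log\bigl(\tfrac{L(d-2)}{4ed^2\log L}\bigr)\geq\log(7/\epsilon)$. Substituting $L = a\log(7/\epsilon)/\log\bigl(b\log(7/\epsilon)\bigr)$ gives $\log L = (1+o(1))\log\log(7/\epsilon)$ and hence $\log\bigl(\tfrac{L(d-2)}{4ed^2\log L}\bigr) = (1+o(1))\log\log(7/\epsilon)$ as $\epsilon\to 0$, so the left-hand side is $(1+o(1))\tfrac{a}{2}\log(7/\epsilon)$; choosing $a>2$ and $b$ a suitable constant depending only on $d$ makes this at least $\log(7/\epsilon)$ for all sufficiently small $\epsilon$. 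This computation is routine, of the same flavor as the one behind Corollary~\ref{CorConfidence}.

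I expect the main obstacle to be the localization carried out in the first two paragraphs. Without the monotonicity of $\phi_{T_n}$ along paths to the rumor center, the natural inclusion is into a union over \emph{all} vertices at distance greater than $L$ from $1$, whose number grows with $n$; interchanging $\limsup_n$ with the resulting infinite series would then require a uniform-in-$n$ tail estimate that Theorem~\ref{thm: inv rumor centrality distance}, being a $\limsup$ statement, does not directly supply. Pinning the bad event to the single shell at distance $L+1$ keeps the union of $n$-independent size, after which only the bookkeeping above remains.
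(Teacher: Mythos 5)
Your proposal is correct, and its skeleton is the same as the paper's: restrict the bad event to a shell of vertices at a fixed distance from the source (so the union is of size independent of $n$), apply Theorem~\ref{thm: inv rumor centrality distance} to each vertex, absorb the shell's cardinality $d^{L}$-type factor as $\exp\left(\tfrac{L}{2}\log d^{2}\right)$ into the exponent, and then do the routine algebra in $\epsilon$. The one genuine difference is how you justify the localization. The paper proves Lemma~\ref{lemma: monotonicity of rc}: if $\phi_T(v)\le\phi_T(1)$ and $v$ lies in $(T,1)_{u\downarrow}$, then $\phi_T(u)\le\phi_T(1)$, obtained by noting that the factors $f(i)$ in the telescoped condition~\eqref{eqn: phiv condition} are increasing in $i$; this makes no reference to the rumor center and reduces directly to the shell at distance exactly $L$. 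You instead use the single-edge re-rooting identity $\phi_{T_n}(w)/\phi_{T_n}(v)=(n-s)/s$ (which is precisely the one-factor version of~\eqref{eqn: phiv condition}) together with minimality of the rumor center to get monotonicity of $\phi_{T_n}$ along the path from $1$ to the rumor center --- a centroid-style argument --- and you reduce to the shell at distance $L+1$, which forces the extra step of replacing $L+1$ by $L$; your handling of that step (monotonicity of $L\mapsto\tfrac{L}{2}\log h(L)$ where the minimum is at least $1$, vacuousness of~\eqref{EqnConfWidth} otherwise) is sound. Two minor remarks: your contradiction argument implicitly uses that on the bad event the rumor center is distinct from $1$, which holds since $L\ge 2$; and in the final part you verify the $\epsilon$-bound only at $L$ equal to the stated threshold, whereas the claim is for all larger $L$ --- but this follows from the same monotonicity of the exponent you already invoked, since the threshold value of $L$ is large for small $\epsilon$. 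The paper's route is slightly more economical (no rumor-center minimality needed, no $L+1\to L$ adjustment), while yours isolates a reusable local fact about how $\phi$ changes across an edge.
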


Note that Corollary~\ref{CorPhiEst} implies the existence of a $1- \epsilon$ confidence set that is sublinear in $\frac{1}{\epsilon}$, since \mbox{\(K = \frac{d(d-1)^L - 2}{d-2}\)} for the set consisting of all nodes at a distance of at most $L$ from the rumor center, and we may check that $a > 1$. This is a strictly better result than the guarantee provided by Corollary~\ref{CorConfidence}, which makes sense, since $\varphi$ corresponds to a maximum likelihood calculation.


\section{An Example of Asymmetry}
\label{SecAsymmetry}

In this section, we provide an example of a graph \(\GG^{*}\) where the estimators \(\psi\) and \(\phi\) fail to select the source of the diffusion.
Let \(d\) and \(D\) be constants such that \(3 \leq d < D\), and
consider a \(d\)-regular tree \(\GG_{d}\) and a \(D\)-regular tree \(\GG_{D}\).
Pick a vertex \(v_{*}\) from \(\GG_{d}\) and vertex \(v^{*}\) from \(\GG_{D}\), and let \(\GG^{*}\) denote the graph obtained by connecting \(v_{*}\) and \(v^{*}\) with an edge.
We will henceforth think of \(\GG_{d}\) and \(\GG_{D}\) as subgraphs of \(\GG^{*}\).
Furthermore, we will say that a diffusion \(\{T_{n}\}\) on \(\GG^{*}\) has reached a vertex \(u\) if \(u \in V(T_{n})\) for some $n$.

Note that the tree \(\GG^{*}\) lacks the symmetry of the regular graphs we have considered before.
The first consequence is that given a diffusion in \(\GG^{*}\), the different sample paths do not necessarily have the same probability.
Therefore, counting sample paths will not lead to a maximum likelihood estimator.
The more important fact is that \(\psi\) and \(\phi\) may supply estimates for the source node that are arbitrarily far away from the actual source as \(n\) goes to infinity.
This is in stark contrast to the uniform and preferential attachment trees of Bubeck et al.~\cite{bubeck} and the geometric trees of Shah and Zaman~\cite{shah}, for which neither \(\psi\) nor \(\phi\) is a maximum likelihood estimator, but both estimators still perform well. The proof of the following proposition is provided in Appendix~\ref{AppPropBad}.

\begin{prop}
Let \(\{T_{n}\}\) be a diffusion on \(\GG^{*}\), 
and suppose the source vertex \(1\) is in \(\GG_{d}\).
Then \(\phi\) and \(\psi\) satisfy
\[
\limsup_{n \to \infty}
\prob\left\{\phi_{T_{n}}(v^{*}) < \phi_{T_{n}}(1) \right\}
=
\limsup_{n \to \infty}
\prob\left\{\psi_{T_{n}}(v^{*}) < \psi_{T_{n}}(1)\right\}
= 1.
\]
\label{prop: phi psi bad}
\end{prop}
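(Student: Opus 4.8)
The plan is to reduce the whole statement to one probabilistic fact --- that the infection eventually concentrates on the high-degree side $\GG_{D}$ --- and then obtain both comparisons by deterministic subtree accounting. Write $m$ for the distance in $\GG^{*}$ between $1$ and $v^{*}$, and let $1 = w_{0}, w_{1}, \dots, w_{m} = v^{*}$ be the connecting path, so that $w_{0}, \dots, w_{m-1} \in V(\GG_{d})$, $w_{m-1} = v_{*}$, and $w_{m-1}w_{m}$ is the bridge. For $n$ large enough that $v^{*} \in V(T_{n})$, put $A_{n} = |V(T_{n}) \cap V(\GG_{d})|$ and $B_{n} = |V(T_{n}) \cap V(\GG_{D})|$, so $A_{n}+B_{n}=n$. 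I would first record two routine facts. (i) The diffusion reaches $v^{*}$ in finite time almost surely: before $v^{*}$ is infected there is a unique frontier vertex on the path, the boundary edge advancing past it is selected at each step with probability at least $c/n$ for a constant $c>0$, and $\sum_{n} c/n = \infty$, so by a conditional Borel--Cantelli argument it is eventually selected; iterate along the path. (ii) By the same argument applied to the boundary edges on each side separately, $A_{n}\to\infty$ and $B_{n}\to\infty$ almost surely. Given these, the proposition follows from the key claim
\begin{equation}
\frac{A_{n}}{B_{n}} \longrightarrow 0 \qquad \text{almost surely}
\label{EqnAsymKey}
\end{equation}
(equivalently, $A_{n} = o(n)$).

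To prove \eqref{EqnAsymKey}, observe that once both $v_{*}$ and $v^{*}$ are infected the number of boundary edges of $T_{n}$ pointing into $V(\GG_{d})$ equals $(d-2)A_{n}+2$ and the number pointing into $V(\GG_{D})$ equals $(D-2)B_{n}+2$ (a subtree with $a$ vertices of a $k$-regular tree has $(k-2)a+2$ edges to uninfected vertices, and the bridge is now internal). Hence the next vertex is infected in $\GG_{d}$ with probability $p_{n}=\big((d-2)A_{n}+2\big)\big/\big((d-2)A_{n}+(D-2)B_{n}+4\big)$, and the two boundary counts evolve as a two-colour generalized P\'{o}lya urn in which a drawn ball is replaced together with $d-2$ (resp.\ $D-2$) additional balls of its colour; since $D-2>d-2$, the colour attached to $\GG_{D}$ is dominant, and the classical asymptotics for such urns yield \eqref{EqnAsymKey}. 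I would also give a self-contained version: a short computation shows that whenever $A_{n},B_{n}\ge N_{0}$ for a suitable constant $N_{0}=N_{0}(d,D)$ one has $\expect[A_{n+1}/B_{n+1}\mid \ff_{n}]\le A_{n}/B_{n}$, so, from the almost surely finite first time both sides exceed $N_{0}$, the ratio $A_{n}/B_{n}$ is a nonnegative supermartingale and converges almost surely to a finite limit $L$. If $\prob(L>0)>0$, then on $\{L>0\}$ we have $A_{n}\asymp B_{n}\asymp n$, so $p_{n}\to p^{*}:=(d-2)L/((d-2)L+(D-2))$; a martingale strong law then forces $A_{n}/n\to p^{*}$, hence $L=p^{*}/(1-p^{*})$, which together with the formula for $p^{*}$ gives $D=d$, a contradiction. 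So $L=0$. (For the proposition one in fact needs only the much weaker consequence $\prob(A_{n}<B_{n})\to1$.) I expect this urn/drift estimate to be the main obstacle; the rest is deterministic.

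Granting \eqref{EqnAsymKey}, the $\phi$ comparison follows by telescoping. For adjacent vertices $x,y$ of a tree $T$ on $n$ vertices one has $\phi_{T}(y)/\phi_{T}(x)=(n-S)/S$ with $S=|(T,x)_{y\downarrow}|$, since every subtree size other than those attached to $x$ and $y$ is unchanged when the root crosses the edge $xy$. Telescoping along $w_{0},\dots,w_{m}$,
\[
\frac{\phi_{T_{n}}(v^{*})}{\phi_{T_{n}}(1)}=\prod_{i=0}^{m-1}\frac{n-S_{i}}{S_{i}},\qquad S_{i}=\big|(T_{n},w_{i})_{w_{i+1}\downarrow}\big|.
\]
Every infected vertex of $\GG_{D}$ lies on the $w_{i+1}$-side of the edge $w_{i}w_{i+1}$, so $S_{i}\ge B_{n}$ for all $i$, and thus each factor is at most $(n-B_{n})/B_{n}=A_{n}/B_{n}$; hence the product is at most $(A_{n}/B_{n})^{m}\to0$, and $\prob\{\phi_{T_{n}}(v^{*})<\phi_{T_{n}}(1)\}\to1$.

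For $\psi$: rooting $T_{n}$ at $1$, the subtree below $w_{1}$ contains all infected vertices of $\GG_{D}$, so $\psi_{T_{n}}(1)\ge B_{n}$. Rooting at $v^{*}$, the child toward $v_{*}$ carries the subtree $V(T_{n})\cap V(\GG_{d})$, of size $A_{n}$, while every other child carries a subtree contained in $\big(V(T_{n})\cap V(\GG_{D})\big)\setminus\{v^{*}\}$, of size at most $B_{n}-1$; thus $\psi_{T_{n}}(v^{*})\le\max\{A_{n},B_{n}-1\}$. By \eqref{EqnAsymKey}, $A_{n}<B_{n}$ for all large $n$, so $\psi_{T_{n}}(v^{*})\le B_{n}-1<B_{n}\le\psi_{T_{n}}(1)$, giving $\prob\{\psi_{T_{n}}(v^{*})<\psi_{T_{n}}(1)\}\to1$. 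Since each probability tends to $1$, so do the two $\limsup$s in the statement.
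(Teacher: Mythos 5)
Your proposal is correct, and its probabilistic core coincides with the paper's: the paper also reduces everything to the two-colour P\'{o}lya urn on boundary-edge counts with unequal replacements $d-2$ and $D-2$ (its Lemma~\ref{lemma: subtree limit sizes}, which invokes Athreya's result to get that the $\GG_D$-side fraction tends to $1$ almost surely), and its $\phi$-comparison is the same telescoping fact you use, stated as condition~\eqref{eqn: phiv condition} with each normalized path-subtree size tending to $1$. Where you genuinely diverge is in how much you extract from the urn and how you handle $\psi$: you need only the weak consequence $A_n<B_n$ eventually (each telescoping factor is at most $A_n/B_n<1$, and $\psi_{T_n}(v^*)\le\max\{A_n,B_n-1\}<B_n\le\psi_{T_n}(1)$), whereas the paper uses the full almost-sure limits $|(T_n,1)_{w_i\downarrow}|/n\to1$ and, for $\psi$, an additional urn computation showing $\psi_{T_n}(v^*)/n$ converges to the maximum of a $\dirichlet\left(\frac{1}{D-2},\dots,\frac{1}{D-2}\right)$ vector, which is almost surely below $1$. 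Your counting bound for $\psi$ is simpler and avoids that distributional limit entirely; your optional supermartingale drift argument (the inequality $p_n\le A_n/(A_n+B_n+1)$ for $A_n,B_n\ge N_0(d,D)$ does check out) makes the key claim self-contained rather than cited, at the cost of a slightly longer argument and reliance on a conditional strong law in the $L>0$ case. What the paper's route buys in exchange is reuse of machinery it already built for Theorem~\ref{thm: inv rumor centrality distance} and a sharper picture of the limiting behaviour of $\psi_{T_n}(v^*)$; what yours buys is a weaker required input and a more elementary, purely deterministic reduction once the urn dominance is granted.
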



While the important observation is that the graph \(\GG^{*}\) provides an example of where asymmetry causes our subtree-based estimators to fail,
we can actually still produce a confidence set for the source.
We make the obvious modification of breaking \(\GG^{*}\) into \(\GG_{d}\) and \(\GG_{D}\) 
and then evaluating \(\phi\) or \(\psi\) on each part of the graph separately.
We then output the vertices within a distance \(L + 1\) of the rumor center on our two subtrees as our confidence set for the source. The proof of the following proposition is provided in Appendix~\ref{AppPropIrreg}.

\begin{prop}
Let \(\{T_{n}\}\) be a diffusion on \(\GG^{*}\).
Let \(T_{n, d}\) and \(T_{n, D}\) denote the subtrees of the diffusion on \(\GG_{d}\) and \(\GG_{D}\), respectively.
Define \(H_{\phi, L}(T_{n}^{\circ})\) to be the union \(H_{\phi, L}(T_{n, d}^{\circ}) \cup H_{\phi, L} (T_{n, D}^{\circ})\). 
Then we have
\[
\limsup_{n \to \infty} \prob\left\{1 \not \in H_{\phi, L}(T_{n}^{\circ})\right\}
\leq 
7 \exp\left(-\frac{L}{2} \log \left(\min\left\{\frac{L(D - 2)}{4 e D^{2} \log(L)}, \frac{L}{2D^2}\right\}\right)\right).
\]
\label{prop: irregular confidence}
\end{prop}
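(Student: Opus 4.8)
The key observation is that the confidence set $H_{\phi,L}(T_n^\circ)$ includes, by construction, the nodes within distance $L$ of the rumor center of each of the two subtrees $T_{n,d}^\circ$ and $T_{n,D}^\circ$ considered separately. So if the source $1$ lies in $\GG_d$, it suffices to control the probability that the diffusion restricted to $\GG_d$ fails to put $1$ within distance $L$ of its own rumor center — and symmetrically if $1 \in \GG_D$. The plan is therefore: (i) condition on which side of the cut edge $\{v_*, v^*\}$ the source lies in (these two events partition the sample space, since $1$ is some fixed vertex); (ii) in each case, argue that the subdiffusion on the relevant component is itself distributed as a diffusion on a regular tree, possibly with one "frozen" boundary direction; and (iii) invoke Corollary~\ref{CorPhiEst} (equivalently, Theorem~\ref{thm: inv rumor centrality distance}) applied to that regular component, with degree $d$ or $D$ as appropriate, and then take the worse of the two bounds, which is the $D$ bound since $D > d$ and the right-hand side of \eqref{EqnConfWidth} is increasing in the degree.

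First I would set up (i) carefully: write $\prob\{1 \notin H_{\phi,L}(T_n^\circ)\} \le \prob\{1 \notin H_{\phi,L}(T_{n,d}^\circ)\}$ when $1 \in \GG_d$ and $\le \prob\{1 \notin H_{\phi,L}(T_{n,D}^\circ)\}$ when $1 \in \GG_D$, using that $H_{\phi,L}(T_n^\circ)$ is a union containing each piece. Then I would establish step (ii), which is the only real content: the subtree $T_{n,d}$ of the diffusion that lives inside $\GG_d$ evolves exactly like a diffusion on the $d$-regular tree $\GG_d$ — each newly infected node in $\GG_d$ is chosen uniformly among uninfected neighbors of the current infected $\GG_d$-subtree — except that at most one of the neighbors of $v_*$ in $\GG_d$ may already be "used up" because the infection crossed the bridge into $\GG_D$ instead. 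More precisely, infections crossing the bridge affect the relative timing of $\GG_d$-infections versus $\GG_D$-infections but, conditioned on the sequence of which side each new node joins, the internal growth of $T_{n,d}$ is that of a standard $d$-regular diffusion (and likewise for $T_{n,D}$ with degree $D$). I would argue that this coupling preserves the unlabeled topology distribution relevant to $\phi$, so Theorem~\ref{thm: inv rumor centrality distance} and hence Corollary~\ref{CorPhiEst} apply verbatim to $(T_{n,d}^\circ, 1)$ when $1 \in \GG_d$.

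Finally, for step (iii), I would take the two bounds from Corollary~\ref{CorPhiEst}, namely the right-hand side of \eqref{EqnConfWidth} with degree $d$ in one case and degree $D$ in the other, and note that the function
\[
d' \mapsto 7\exp\!\left(-\frac{L}{2}\log\!\left(\min\!\left\{\frac{L(d'-2)}{4ed'^2\log L}, \frac{L}{2d'^2}\right\}\right)\right)
\]
is monotone, so that replacing $d$ by $D$ only weakens the bound; hence both cases are dominated by the displayed expression with $D$ in place of $d$. Combining over the two cases for the location of $1$ yields the stated inequality.

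I expect the main obstacle to be making step (ii) rigorous: one must verify that the internal dynamics of each component really do reduce to an honest regular-tree diffusion. The subtlety is the single vertex $v_*$ (resp.\ $v^*$), which has $d-1$ rather than $d$ neighbors available inside $\GG_d$ once the bridge edge is consumed; I would handle this by noting that Theorem~\ref{thm: inv rumor centrality distance}'s proof only needs a lower bound on the effective degree and that one fewer available neighbor at a single vertex cannot help the adversarial competitor $v$, or alternatively by embedding $\GG_d$ as a genuine subgraph of a larger $d$-regular tree and observing that the diffusion on that subgraph is stochastically dominated in the relevant sense. Everything else is bookkeeping and an appeal to monotonicity.
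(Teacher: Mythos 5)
Your proposal follows essentially the same route as the paper: condition on which component contains the source, observe that the diffusion restricted to \(\GG_{d}\) (resp.\ \(\GG_{D}\)) behaves exactly as a \(d\)-regular (resp.\ \(D\)-regular) diffusion whose infected set grows to infinity, apply Corollary~\ref{CorPhiEst} to each piece, and bound both cases by the degree-\(D\) expression — precisely the paper's argument via its conditional-probability decomposition together with Lemma~\ref{lem: spread on both}. The one subtlety you flagged is moot: the bridge \(\{v_{*}, v^{*}\}\) is an \emph{extra} edge, so \(v_{*}\) keeps all \(d\) of its neighbors inside \(\GG_{d}\) and the restricted process is an honest \(d\)-regular diffusion, which is all the paper asserts.
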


\noindent Note that we could prove a similar theorem quite easily for the estimator \(\psi\).

However, the result of Proposition~\ref{prop: irregular confidence} is still somewhat unsatisfying, since we require outputting more vertices in order to obtain the same level of confidence, and it does not readily generalize to other trees or graphs. An important future avenue of research is to devise a better estimating procedure for the graph \(\GG^{*}\) as another step toward dealing with more general graphs.


\section{Proofs of Theorems}
\label{SecProofs}

In this section, we provide proof outlines for the theorems stated in Section~\ref{SecMain}.

\subsection{Proof of Theorem~\ref{thm: max subtree regular tree}}
\label{SecThmSubtree}

Our proof mostly follows the analysis of Theorem 3 in Bubeck et al.~\cite{bubeck}.

Recall that we label vertices by the order of their appearance in \(T_{n}\). Let \(T_{i, K}^{n}\) denote the tree that contains vertex \(i\) in the forest obtained from \(T_{n}\) by removing all edges between the vertices \(\{1, \ldots, K\}\). We  have the following lemma, proved in Appendix~\ref{AppLemDirichlet}:

\begin{lem}
\label{LemDirichlet}
The vector of subtree proportions converges in distribution:
\begin{equation*}
\left(\frac{|T^n_{1,K}|}{n}, \dots, \frac{|T^n_{K,K}|}{n}\right) \stackrel{d}{\longrightarrow} \dirichlet\left(\frac{d_{\GG}(1) - d_{T_{K}}(1)}{d - 2},
\dots, \frac{d_{\GG}(K) - d_{T_{K}}(K)}{d - 2}\right).
\end{equation*}
\end{lem}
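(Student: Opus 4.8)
The plan is to analyze the growth of the subtrees $T^n_{i,K}$, $i=1,\dots,K$, via a Pólya-type urn. Fix the first $K$ arrivals, so that $T_K$ is determined, and consider $n \ge K$. At each subsequent step, vertex $n+1$ attaches to one of the currently uninfected neighbors of $V(T_n)$, chosen uniformly at random. The key observation is that the number of uninfected neighbors (the "boundary") of $T^n_{i,K}$ is an affine function of $|T^n_{i,K}|$: in a $d$-regular tree, each vertex that joins a subtree consumes one boundary slot (the edge along which it was infected) and contributes $d-1$ new ones, for a net change of $d-2$. Hence if $b^n_i$ denotes the number of uninfected neighbors of $T^n_{i,K}$, then $b^n_i = (d-2)|T^n_{i,K}| + c_i$ for a constant $c_i$ determined by $T_K$; a short count gives $c_i = d_{\GG}(i) - d_{T_K}(i) - (d-2)$ once one accounts for the vertices of $T_K$ lying in the $i$-th piece (with the subtlety that internal vertices of $T_K \cap T^n_{i,K}$ other than $i$ itself behave like ordinary infected vertices, while the "seed" contributes the residual degree at $i$). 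I would verify this affine relation carefully by induction on $n$, since the exact value of the additive constant is what produces the Dirichlet parameters.

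Given this, the process $(|T^n_{1,K}|,\dots,|T^n_{K,K}|)$ is exactly a generalized Pólya urn with $K$ colors in which, conditioned on the current composition, the probability that color $i$ is incremented is proportional to $b^n_i = (d-2)|T^n_{i,K}| + c_i$. Equivalently, after rescaling by $d-2$, color $i$ has "weight" $|T^n_{i,K}| + c_i/(d-2)$ and gains one unit of weight (in the original count) per draw, i.e. $1/(d-2)$ units in the rescaled count. This is a classical Friedman/Pólya urn with reinforcement, and its normalized composition vector converges almost surely (hence in distribution) to a Dirichlet distribution whose parameters are the initial rescaled weights, namely $\big(\tfrac{d_\GG(i) - d_{T_K}(i)}{d-2}\big)_{i=1}^K$ after the additive constants are collected. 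I would invoke the standard limit theorem for such urns (e.g.\ via the exchangeability / de Finetti route, or via the martingale $M^n_i = b^n_i / (\text{total boundary})$ which is bounded and converges, combined with a moment computation identifying the limit as Dirichlet).

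The main obstacle is bookkeeping rather than conceptual: getting the additive constant $c_i$ exactly right so that the Dirichlet parameters come out as stated, including the edge cases where some vertex among $\{1,\dots,K\}$ has degree in $T_K$ equal to its degree in $\GG$ (a boundary-free component, contributing parameter $0$), or where $T_K$ is disconnected after edge removal in a way that places several of the first $K$ labels in the same component. One must also confirm that the total boundary $\sum_i b^n_i$ grows linearly with the right slope — it equals $(d-2)n + \text{const}$ — so that the normalization by $n$ (rather than by $\sum_i b^n_i$) gives the same limit. Once the affine boundary identity and the linear growth of the total are nailed down, the convergence is immediate from the cited Pólya urn result, and the details can be relegated to Appendix~\ref{AppLemDirichlet}.
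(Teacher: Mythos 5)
Your proposal is correct and follows essentially the same route as the paper: you track the boundary (uninfected-neighbor) counts of the pieces $T^n_{i,K}$, recognize a P\'{o}lya urn with replacement matrix $(d-2)I_K$ and initial composition $d_{\GG}(i)-d_{T_K}(i)$, and use the affine identity $E^n_{i,K}=(d-2)|T^n_{i,K}|-d_{T_K}(i)+2$ to transfer the Dirichlet limit to the subtree proportions, which is exactly the paper's argument. The bookkeeping worries you raise are minor (each component contains exactly one of $1,\dots,K$, since the path in $\GG$ between any two such vertices lies in $T_K$; and the degenerate case $d_{\GG}(i)=d_{T_K}(i)$ is handled by dropping that coordinate, as the paper does).
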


Let \(\eta \in (0, 1)\), and note that
\begin{align}
\prob\left\{1 \not \in H_{\psi, K}(T_n^\circ)\right\} 
&\leq \prob\left\{\exists i > K : \psi_{T_{n}}(i) \leq \psi_{T_{n}}(1)\right\} \notag \\ 
& \leq \prob\left\{(1 - \eta)n \leq \psi_{T_{n}}(1)\right\} 
+ \prob\left\{\exists i > K: \psi_{T_{n}}(i) \leq (1 - \eta)n\right\}.
\label{eqn: thm split}
\end{align}
We now bound each of these terms. For the first term, note that \(\psi_{T_{n}}(1) \leq \max(|T_{1, 2}^{n}|, |T_{2, 2}^{n}|).\) By Lemma~\ref{LemDirichlet}, we know that \(|T_{1, 2}^{n}| / n\) and \(|T_{2, 2}^{n}| / n\) are identically distributed and converge to a \(\betavar \left(\frac{d - 1}{d - 2}, \frac{d - 1}{d - 2}\right)\) random variable.
Thus, we obtain
\begin{align*}
  &\begin{aligned}
\limsup_{n \to \infty} \prob\left\{(1 - \eta)n \leq \psi_{T_{n}}(1)\right\}  
&\leq 2 \limsup_{n \to \infty} \prob\left\{(1 - \eta)n \leq |T_{1, 2}^{n}|\right\} \\ 
&= 
2  \frac{1}{\beta\left(\frac{d - 1}{d - 2}, \frac{d - 1}{d - 2}\right)}  
\int_{1 - \eta}^{1} 
x^{\frac{1}{d - 2}} (1 - x)^{\frac{1}{d - 2}} \; dx\\ 
&\leq \frac{2}{\beta\left(\frac{d - 1}{d - 2}, \frac{d - 1}{d - 2}\right)}
\int_{0}^{\eta}  x^{\frac{1}{d - 2}} \; dx \\ 
&= C_{1} \eta^{1 + \frac{1}{d - 2}},
  \end{aligned}
\end{align*}
where 
\(C_{1} = \frac{2(d - 2)}{(d - 1)\beta\left(\frac{d - 1}{d - 2}, \frac{d - 1}{d - 2}\right)}\).

To bound the second term, note that for  \(i > K\), we have
\(\psi_{T_{n}}(i) \geq \min_{1 \leq k \leq K} \sum_{j = 1, j \neq k}^{K} |T_{j, K}^{n}|\).
For ease of exposition, let \(W_{k, K}^{n} = \sum_{j = 1, j \neq k}^{K} |T_{j, K}^{n}|\).
We then have
\begin{align*}
  &\begin{aligned}
\limsup_{n \to \infty} 
\prob\left\{\exists i > K: \psi_{T_{n}}(i) \leq (1 - \eta)n \right\} 
& \leq \lim_{n \to \infty} 
\prob\left\{\exists k: W_{k, K}^{n} \leq (1 - \eta)n\right\} \\ 
&\leq \lim_{n \to \infty} \sum_{k = 1}^{K} 
\prob\left\{W_{k, K}^{n} \leq (1 - \eta)n \right\}.
  \end{aligned}
\end{align*}

The next lemma follows by integrating the appropriate probability density functions obtained from the convergence result in Lemma~\ref{LemDirichlet}. The proof is provided in Appendix~\ref{AppLemWtail}.

\begin{lem}
\label{LemWtail}
For each $1 \le k \le K$, we have the inequality
\begin{equation*}
\lim_{n \rightarrow \infty} \prob\left\{W^n_{k, K} \le (1-\eta) n\right\} \le C_2 K^{1 + \frac{1}{d - 2}} (1 - \eta)^{K - 1 + \frac{1}{d - 2}}.
\end{equation*}
\end{lem}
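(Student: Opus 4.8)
## Proof Plan for Lemma~\ref{LemWtail}

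\textbf{Setup via the Dirichlet limit.} The plan is to use Lemma~\ref{LemDirichlet} directly: the normalized subtree sizes $(|T^n_{1,K}|/n, \dots, |T^n_{K,K}|/n)$ converge in distribution to a Dirichlet vector with parameters $\alpha_j = \frac{d_\GG(j) - d_{T_K}(j)}{d-2}$. Since $W^n_{k,K}/n = \sum_{j \neq k} |T^n_{j,K}|/n$ converges to the sum of all but the $k$-th coordinate of this Dirichlet vector, and by the standard aggregation property of the Dirichlet distribution, this limit is a $\betavar\left(\sum_{j \neq k}\alpha_j, \alpha_k\right)$ random variable. The event $\{W^n_{k,K} \le (1-\eta)n\}$ therefore has limiting probability equal to the Beta CDF evaluated at $1-\eta$, i.e. $\frac{1}{\beta(A_k, \alpha_k)}\int_0^{1-\eta} x^{A_k - 1}(1-x)^{\alpha_k - 1}\,dx$ where $A_k = \sum_{j \neq k}\alpha_j$. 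Because the boundary $1-\eta < 1$ avoids the potential singularity at $x=1$ (where $\alpha_k - 1 = \frac{d_\GG(k) - d_{T_K}(k)}{d-2} - 1$ could be negative), the integral is well-behaved; I would bound $(1-x)^{\alpha_k - 1} \le$ a constant times $(\min$-type factor$)$ or simply note $\alpha_k \ge \frac{1}{d-2}$ so the exponent satisfies $\alpha_k - 1 \ge \frac{1}{d-2} - 1 > -1$, keeping everything integrable.

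\textbf{Bounding the Beta integral.} The key estimate is that $\int_0^{1-\eta} x^{A_k - 1}(1-x)^{\alpha_k - 1}\,dx \le C\,(1-\eta)^{A_k}$ up to the normalizing constant, using that $x^{A_k - 1} \le x^{A_k - 1}$ and integrating the dominant power near the upper endpoint; more carefully, substituting or using $x \le 1-\eta$ on the $x^{A_k-1}$ factor when $A_k \ge 1$, or a direct tail bound when $A_k < 1$. Here I would use the lower bound $A_k = \sum_{j\neq k} \alpha_j \ge (K-1)\cdot\frac{1}{d-2}$ together with the fact that each node among $\{1,\dots,K\}$ has at least one neighbor in $\GG$ outside $T_K$ except possibly in degenerate configurations — actually the cleanest route is to observe that $\sum_{j=1}^K (d_\GG(j) - d_{T_K}(j)) = Kd - 2(K-1)$ since $T_K$ has $K-1$ edges, so $\sum_j \alpha_j = \frac{Kd - 2(K-1)}{d-2} = K + \frac{2}{d-2} \cdot 1$... recomputing: $\frac{Kd - 2K + 2}{d-2} = K + \frac{2}{d-2}$, hence $A_k = K + \frac{2}{d-2} - \alpha_k$. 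Combined with $\alpha_k \le \frac{d-1}{d-2}$ (since $d_\GG(k) = d$ and $d_{T_K}(k) \ge 1$), this gives $A_k \ge K + \frac{2}{d-2} - \frac{d-1}{d-2} = K - 1 + \frac{1}{d-2}$, which is exactly the exponent appearing in the claimed bound. So the Beta tail contributes $(1-\eta)^{K - 1 + \frac{1}{d-2}}$.

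\textbf{Controlling the normalizing constant in $K$.} The remaining task is to show $\frac{1}{\beta(A_k,\alpha_k)}$ grows at most polynomially in $K$, specifically like $K^{1 + \frac{1}{d-2}}$. Since $\beta(A_k,\alpha_k) = \frac{\Gamma(A_k)\Gamma(\alpha_k)}{\Gamma(A_k + \alpha_k)}$ and $A_k + \alpha_k = K + \frac{2}{d-2}$ is essentially $K$, while $\alpha_k \in [\frac{1}{d-2}, \frac{d-1}{d-2}]$ is bounded, I would use the ratio asymptotic $\frac{\Gamma(A_k + \alpha_k)}{\Gamma(A_k)} \asymp A_k^{\alpha_k}$ (Stirling / Gautschi-type inequality), so $\frac{1}{\beta(A_k,\alpha_k)} \asymp \frac{A_k^{\alpha_k}}{\Gamma(\alpha_k)} \le C\, K^{\alpha_k} \le C\, K^{\frac{d-1}{d-2}} = C\,K^{1 + \frac{1}{d-2}}$, with $C$ depending only on $d$ (absorbing $\min_{1/(d-2) \le \alpha \le (d-1)/(d-2)} \Gamma(\alpha)^{-1}$, which is a positive constant). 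Multiplying this $K^{1+\frac{1}{d-2}}$ factor by the Beta tail $(1-\eta)^{K-1+\frac{1}{d-2}}$ yields the stated $C_2 K^{1 + \frac{1}{d-2}}(1-\eta)^{K-1+\frac{1}{d-2}}$.

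\textbf{Main obstacle.} The genuinely delicate part is handling the normalizing constant $1/\beta(A_k,\alpha_k)$ uniformly over all $k$ and all the varying parameter values $\alpha_k$ while keeping the $K$-dependence exactly polynomial of degree $1 + \frac{1}{d-2}$; this requires a clean Gamma-ratio inequality rather than crude bounds, and one must verify that the exponent $\alpha_k$ is genuinely bounded above by $\frac{d-1}{d-2}$ in all admissible configurations of $d_{T_K}(k)$ (i.e., that every one of the first $K$ vertices has degree at least $1$ in $T_K$, which holds because $T_K$ is a tree on $K \ge 1$ vertices and for $K > 3$ it is connected with no isolated vertices). A secondary subtlety is justifying the interchange of $\lim_{n\to\infty}$ with the probability of the event $\{W^n_{k,K} \le (1-\eta)n\}$: since $1-\eta$ is an interior point and the limiting Beta distribution is continuous there, convergence in distribution suffices, so no extra argument beyond the portmanteau lemma is needed.
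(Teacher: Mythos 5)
Your plan follows the paper's own proof essentially step for step: the paper likewise reads off from Lemma~\ref{LemDirichlet} that $\frac{1}{n}\left(W^n_{k,K}, |T^n_{k,K}|\right)$ converges to a two-parameter Dirichlet, so that $W^n_{k,K}/n$ has a $\betavar(c_{1,k},c_{2,k})$ limit with $c_{1,k} = K-1+\frac{d_{T_K}(k)}{d-2} \ge K-1+\frac{1}{d-2}$ (the same identity you obtain from the edge count $\sum_j \alpha_j = K+\frac{2}{d-2}$), bounds the CDF at $1-\eta$ by $\frac{(1-\eta)^{c_{1,k}}}{c_{1,k}\,\beta(c_{1,k},c_{2,k})}$, and then shows $\beta(c_{1,k},c_{2,k})^{-1} \le C K^{1+\frac{1}{d-2}}$ via Stirling's inequalities together with the lower bound $\Gamma \ge \frac{7}{8}$ on the positive reals; your Gautschi-type ratio estimate $\Gamma(A_k+\alpha_k)/\Gamma(A_k) \asymp A_k^{\alpha_k}$ with $\Gamma(\alpha_k)^{-1}$ bounded is an equivalent route to the same conclusion.

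The one concrete gap is your assertion that $\alpha_k \ge \frac{1}{d-2}$: this fails when $d_{T_K}(k) = d_{\GG}(k) = d$, which happens with positive probability as soon as $K > d$, and in that case the $k$th coordinate is excluded from the Dirichlet limit and there is no Beta density to integrate. The paper carves this case out at the start of its proof: there $|T^n_{k,K}| = 1$ for all $n$, hence $W^n_{k,K} = n-1 > (1-\eta)n$ once $n > 1/\eta$, so the limiting probability is $0$ and the bound holds trivially; you need this one-line case distinction. As for the step you flag as delicate, namely the factor $(1-x)^{\alpha_k-1}$ on $[0,1-\eta]$ when $\alpha_k < 1$ (i.e.\ $d_{T_K}(k) \ge 3$): the paper handles it by simply dropping that factor, which is literally valid only for $\alpha_k \ge 1$, so this wrinkle is shared with the paper's write-up rather than peculiar to your argument; still, for a complete proof you should give the small additional tail estimate in that regime, since "integrability" alone does not yield the $(1-\eta)^{K-1+\frac{1}{d-2}}$ factor you need.
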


Substituting the bounds into inequality~\eqref{eqn: thm split}, we arrive at the desired result.


\subsection{Proof of Theorem~\ref{thm: inv rumor centrality distance}}
\label{SecThmRumor}

Our proof is largely inspired by the proof of Theorem 5 in Bubeck et al.~\cite{bubeck}.

Let \(v\) be a vertex in \(V(\GG)\).
We also write
\(v = (j_{1}, \ldots, j_{\ell(v)})\), where \((j_{1}, \ldots, j_{i})\) is the \(j_{i}^\text{th}\) child of 
\((j_{1}, \ldots, j_{i - 1})\) when viewing \(1\) as the root of the tree.
Note that \(\phi_{T_{n}}(v) \leq \phi_{T_{n}}(1)\)
if and only if
\begin{equation}
\prod_{i = 1}^{\ell(v)} (n - |(T_{n}, 1)_{(j_{1}, \ldots, j_{i}) \downarrow}|)
=
\prod_{i = 0}^{\ell(v) - 1} |(T_{n}, v)_{(j_{1}, \ldots, j_{i}) \downarrow}|
\leq 
\prod_{i = 1}^{\ell (v)} |(T_{n}, 1)_{(j_{1}, \ldots, j_{i}) \downarrow}|.
\label{eqn: phiv condition}
\end{equation}
Dividing each term in the product by \(n\), this is equivalent to
\[
\prod_{i = 1}^{\ell(v)} \left(1 - \frac{1}{n}|(T_{n}, 1)_{(j_{1}, \ldots, j_{i}) \downarrow}|\right)
\leq 
\prod_{i = 1}^{\ell (v)} \frac{1}{n} |(T_{n}, 1)_{(j_{1}, \ldots, j_{i}) \downarrow}|.
\]

Since we are interested in the limiting probability \(\limsup_{n \to \infty}
\prob\left\{\phi_{T_{n}}(v) \leq \phi_{T_{n}}(1)\right\}\),
we would like to understand the random variables
\(|(T_{n}, 1)_{(j_{1}, \ldots, j_{i}) \downarrow}| / n\). The following lemma provides the required convergence result, and is proved via a P\'{o}lya urn analysis. The proof is contained in Appendix~\ref{AppLemBetaConv}.

\begin{lem}
\label{LemBetaConv}
We have the convergence in distribution
\begin{multline*}
\left(\frac{|(T_n, 1)_{(j_1) \downarrow}|}{n}, \frac{|(T_n, 1)_{(j_1, j_2) \downarrow}|}{n}, \cdots, \frac{|(T_n, 1)_{(j_1, \dots, j_{\ell(v)}) \downarrow}|}{n}\right) \\
\stackrel{d}{\longrightarrow} \left(B_{(j_1)}, \; B_{(j_1)} B_{(j_1, j_2)}, \; \dots, \; \prod_{i=1}^{\ell(v)} B_{(j_1, \dots, j_{\ell(v)})}\right),
\end{multline*}
where the \(B_{(j_{1}, \ldots, j_{i})}\)'s  are independent \(\betavar\left(\frac{1}{d - 2}, \frac{d - 1}{d - 2}\right)\) random variables for \(i = 1\), and \(\betavar\left(\frac{1}{d - 2}, 1\right)\) random variables for \(i > 1\).
\end{lem}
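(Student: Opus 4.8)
\textbf{Proof proposal for Lemma~\ref{LemBetaConv}.}

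The plan is to set up a P\'{o}lya-type urn for each level of the path $v = (j_1, \ldots, j_{\ell(v)})$ and track how the subtree sizes evolve as the diffusion adds vertices. The key structural observation is the following: consider the moment $m$ at which the vertex $(j_1, \ldots, j_i)$ first enters the diffusion tree $T_m$ (for $i=1$ this is the first time a particular neighbor of $1$ is infected; note vertex $1$ is the root). From that point on, the quantity of interest is the number of vertices that will eventually fall into $(T_n, 1)_{(j_1, \ldots, j_i)\downarrow}$. At any stage, the next infected vertex is chosen uniformly among the uninfected neighbors of the current tree; the number of such neighbors located ``below'' $(j_1, \ldots, j_i)$ versus elsewhere is what an urn must keep track of. Because $\GG$ is $d$-regular, each newly infected vertex contributes exactly $d-1$ new uninfected neighbors (except the root, which contributes $d$), so the total number of ``active slots'' grows linearly, and the number of active slots inside a given subtree behaves like a P\'{o}lya urn with replacement parameter $d-2$ (one slot is consumed when a vertex is infected, $d-1$ are created, net $d-2$). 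This is exactly the kind of urn whose normalized composition converges to a Beta (or Dirichlet) limit, and the shape parameters are read off from the initial composition divided by $d-2$.

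First I would make precise the reduction: conditioning on the history up to the time $(j_1)$ is infected, the pair (number of active neighbors strictly inside $(T_n,1)_{(j_1)\downarrow}$, number of active neighbors outside) forms a P\'{o}lya urn where at each step the chosen color gets $d-2$ balls of its own color added; the initial composition inside is the $d-1$ neighbors of $(j_1)$ other than $1$, and outside is whatever remains, which after normalization by $d-2$ gives parameters $\frac{1}{d-2}$ and $\frac{d-1}{d-2}$ for the first coordinate $B_{(j_1)}$. For a deeper level $i>1$, at the moment $(j_1,\ldots,j_i)$ is infected, it sits inside the already-growing subtree rooted at $(j_1,\ldots,j_{i-1})$, and within \emph{that} subtree's pool of active slots, exactly one of them has just been ``used'' to place $(j_1,\ldots,j_i)$ while $d-1$ new ones are created below it; so relative to the parent subtree the fresh urn starts with composition $(d-1, \text{rest})$ but the ``$1$'' that tracks the entry accounts for the $\betavar(\frac{1}{d-2},1)$ shape — here the second parameter is $1$ (i.e.\ $\frac{d-2}{d-2}$) rather than $\frac{d-1}{d-2}$ because the parent subtree, once $(j_1,\ldots,j_{i-1})$ is fixed as an interior vertex, behaves like an urn seeded by a single consumed slot. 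I would cite a standard P\'{o}lya urn limit theorem (e.g.\ the classical result that a two-color P\'{o}lya urn with constant replacement converges a.s.\ to a Beta law, or the Athreya--Karlin / de Finetti framework) to get the marginal convergence of each $|(T_n,1)_{(j_1,\ldots,j_i)\downarrow}|/|(T_n,1)_{(j_1,\ldots,j_{i-1})\downarrow}|$ to the stated Beta variable.

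The next step is to establish the \emph{joint} convergence and, crucially, the \emph{independence} of the limiting factors $B_{(j_1)}, B_{(j_1,j_2)}, \ldots$. The idea is a nested/conditional argument: given the entire infection process restricted to everything outside $(T_n,1)_{(j_1,\ldots,j_{i-1})\downarrow}$ and given the time of entry of $(j_1,\ldots,j_i)$, the dynamics inside the child subtree are those of a fresh P\'{o}lya urn independent of the ancestors' urns, because the $d$-regular tree structure means the subtree below $(j_1,\ldots,j_i)$ interacts with the rest of the graph only through the single edge up to its parent. Thus conditionally the ratio at level $i$ converges to a Beta variable whose law does not depend on the ancestor ratios, which yields asymptotic independence; writing $|(T_n,1)_{(j_1,\ldots,j_i)\downarrow}|/n$ as the telescoping product of the consecutive ratios then gives the claimed product form in the limit. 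I expect the main obstacle to be handling the \emph{random entry times} cleanly — the urn for a deep vertex only ``starts'' at a random time, and one must argue that conditioning on this time (which is a.s.\ finite and has no effect on the limiting Beta law, only on the finite transient) is legitimate and preserves independence; making the nested conditioning rigorous, and confirming that the exponents work out to $\frac{1}{d-2}$ and $1$ at deeper levels rather than $\frac{1}{d-2}$ and $\frac{d-1}{d-2}$, is the delicate part. A secondary technical point is passing from almost-sure urn convergence for each fixed starting configuration to convergence in distribution of the unconditioned vector, which is routine via bounded convergence once the conditional limits are identified.
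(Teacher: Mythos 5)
Your overall strategy is the paper's: count the uninfected-neighbor ``slots'' attached to each subtree, note that each infection consumes one slot and creates $d-1$ so the inside/outside counts form a P\'{o}lya urn with replacement $d-2$, write $|(T_n,1)_{(j_1,\dots,j_i)\downarrow}|/n$ as a telescoping product of consecutive ratios, identify each ratio's limit as a Beta variable, and argue independence of the factors by a nested/conditional urn argument. (The paper implements the slot count as $E^n_v$, the number of active edges attached to $(T_n,1)_{v\downarrow}$, and uses $E^n_v=(d-2)|(T_n,1)_{v\downarrow}|+1$ to transfer the urn limit to subtree sizes; the level-one marginal $\betavar\bigl(\frac{1}{d-2},\frac{d-1}{d-2}\bigr)$ comes from the $d$-color urn at the root exactly as in your sketch.) However, two points in your proposal are genuine gaps rather than routine details. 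First, your urn for level $i>1$ is launched at the wrong time: you start it when the child $(j_1,\dots,j_i)$ is infected, with composition ``$(d-1,\text{rest})$''. Normalizing that composition by $d-2$ would give a first parameter $\frac{d-1}{d-2}$, not $\frac{1}{d-2}$, and the ``rest'' is random at that moment, so no clean Beta law can be read off; the remark that the ``1 that tracks the entry accounts for the $\betavar(\frac{1}{d-2},1)$ shape'' is not a derivation. The fix, which is what the paper does, is to start the urn when the \emph{parent} $(j_1,\dots,j_{i-1})$ is infected: at that instant each of its $d-1$ children subtrees carries exactly one active edge, so the $(d-1)$-color urn with replacement $(d-2)I_{d-1}$ converges to a $\dirichlet\bigl(\frac{1}{d-2},\dots,\frac{1}{d-2}\bigr)$ on $d-1$ coordinates, whose one-dimensional marginal is $\betavar\bigl(\frac{1}{d-2},\frac{d-2}{d-2}\bigr)=\betavar\bigl(\frac{1}{d-2},1\bigr)$.

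Second, the independence of the limiting factors is the real content of the lemma, and your proposal only asserts it (``conditionally the child urn is fresh and independent of the ancestors' urns''). This is precisely where the paper spends most of its proof: it introduces the stopping times $\tau_k$ at which vertices join $(T_n,1)_{(j_1)\downarrow}$, the $\sigma$-fields generated by the evolution of the tree outside that subtree and by the ratios $Y_{\tau_k}=E^{\tau_k}_{(j_1,j_2)}/E^{\tau_k}_{(j_1)}$, and proves the product rule by partitioning tree histories into equivalence classes and checking factorization class by class --- exactly because the intuitive conditional-independence statement involves conditioning on random entry times and on the entire outside history, and must be verified rather than invoked. (The paper also points to an alternative route via exchangeability/de Finetti, as in Mauldin et al.) You correctly flag the random entry times as the delicate obstacle, but as written your argument stops where the work begins; to complete the proof you would need either the paper's history-factorization argument or an explicit appeal to a theorem guaranteeing independence of limit ratios in such nested P\'{o}lya urns, together with the corrected initial composition described above.
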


By Lemma~\ref{LemBetaConv}, it therefore follows that
\begin{equation}
\limsup_{n \to \infty}
\prob\left\{\phi_{T_{n}}(v) \leq \phi_{T_{n}}(1)\right\}
= 
\prob\left\{
\prod_{i = 1}^{\ell(v)} \left(1 - \prod_{k = 1}^{i} B_{(j_{1}, \ldots, j_{k})}\right)
\leq 
\prod_{i = 1}^{\ell(v)} \prod_{k = 1}^{i} B_{(j_{1}, \ldots, j_{k})} 
\right\}.
\label{eqn: mixed betas}
\end{equation}
Now, it would simplify the analysis if all of the 
\(B_{(j_{1}, \ldots, j_{i})}\)
were \(\betavar\left(\frac{1}{d - 2}, 1\right)\) random variables.
By the stochastic domination result proved in Lemma~\ref{lemma: beta dominance} of Appendix~\ref{AppAuxiliary}, we may substitute the 
\(B_{(j_{1}, \ldots, j_{k})}\) for 
\(B_{(j_{1}, \ldots, j_{k})}'\), where the latter set consists only of 
\(\betavar\left(\frac{1}{d - 2}, 1\right)\) random variables.
For a value of $t$ to be chosen later, we then have
\begin{align}
\limsup_{n \to \infty} \prob\left\{\phi_{T_{n}}(v) \leq \phi_{T_{n}}(1)\right\}
& \leq \prob\left\{\prod_{i = 1}^{\ell(v)} \left(1 - \prod_{k = 1}^{i} B_{(j_{1}, \ldots, j_{k})}'\right)
\leq \prod_{i = 1}^{\ell(v)} \prod_{k = 1}^{i} B_{(j_{1}, \ldots, j_{k})}' \right\} \notag \\
& \leq \prob\left\{\prod_{i = 1}^{\ell(v)} \left(1 - \prod_{k = 1}^{i} B_{(j_{1}, \ldots, j_{k})}'\right) 
\leq \exp(-t) \right\} \notag \\
& \qquad + \prob\left\{\exp(-t) \leq \prod_{i = 1}^{\ell(v)} \prod_{k = 1}^{i} B_{(j_{1}, \ldots, j_{k})}'\right\}.
\label{eqn: split beta prob}
\end{align}
We now bound each term on the right-hand side of inequality~\eqref{eqn: split beta prob} separately. We have the following lemmas, proved in Appendices~\ref{AppLemFirst} and~\ref{AppLemSecond}:

\begin{lem}
\label{LemFirst}
For any $t \in \mathbb R$, we have
\begin{equation}
\label{eqn: lemma 3 bound}
\prob\left\{\prod_{i = 1}^{\ell(v)} \left(1 - \prod_{k = 1}^{i} B_{(j_{1}, \ldots, j_{k})}'\right) 
\leq \exp(-t) \right\} \le 6 \cdot 2^{\frac{\ell(v)}{4}} \exp\left(-\frac{t}{4}\right).
\end{equation}
\end{lem}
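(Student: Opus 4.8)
The key is to use the explicit law of the $\betavar\!\left(\tfrac{1}{d-2},1\right)$ variables. Set $a=\tfrac{1}{d-2}\in(0,1]$ (using $d\ge 3$). A $\betavar(a,1)$ variable has the same distribution as $e^{-E/a}$ for a standard exponential $E$, so I would take i.i.d.\ exponentials $E_1,E_2,\dots$ and write $B'_{(j_1,\dots,j_k)}\stackrel{d}{=}e^{-E_k/a}$; then the partial products become $P_i:=\prod_{k=1}^{i}B'_{(j_1,\dots,j_k)}\stackrel{d}{=}e^{-\Gamma_i/a}$, where $\Gamma_i=E_1+\cdots+E_i$. Consequently the event in \eqref{eqn: lemma 3 bound} is $\bigl\{\sum_{i=1}^{\ell(v)}-\log(1-e^{-\Gamma_i/a})\ge t\bigr\}$, and since each summand is nonnegative it is contained in $\{S\ge t\}$ with $S:=\sum_{i=1}^{\infty}-\log\!\bigl(1-e^{-\Gamma_i/a}\bigr)$. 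Thus it suffices to bound the tail of the single random variable $S$.

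Next I would use the elementary pointwise inequality $-\log(1-e^{-x})\le \log 2+\log_+(1/x)$, valid for all $x>0$ (for $x\ge 1$ the left side is below $\log 2$ and $\log_+(1/x)=0$; for $x<1$ use $1-e^{-x}\ge x-x^2/2\ge x/2$). Since $1/a\ge 1$, this gives $S\le \ell(v)\log 2 + T$ with $T:=\sum_{i=1}^{\infty}\log_+(a/\Gamma_i)\ge 0$. If $t\le \ell(v)\log 2$ the claimed bound is immediate, as its right-hand side is at least $6>1$. Otherwise apply Markov to $e^{\lambda T}$: $\prob\{T\ge t-\ell(v)\log 2\}\le e^{-\lambda(t-\ell(v)\log 2)}\,\ee[e^{\lambda T}]$, and the choice $\lambda=\tfrac14$ produces exactly the factors $2^{\ell(v)/4}$ and $e^{-t/4}$, provided $\ee[e^{T/4}]$ is a bounded constant.

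The remaining point — and the real content — is to bound $\ee[e^{\lambda T}]$ uniformly. For this I would use that $\{\Gamma_i\}_{i\ge 1}$ is a rate-one Poisson process on $(0,\infty)$: only the $\Gamma_i$ lying in $(0,a)$ contribute to $T$, their number $N$ is $\mathrm{Pois}(a)$, and conditionally on $N=n$ these points are i.i.d.\ $\mathrm{Uniform}(0,a)$ (ordering is irrelevant to $T$). Since $\ee[(a/V)^{\lambda}]=\tfrac{1}{1-\lambda}$ for $V\sim\mathrm{Uniform}(0,a)$ and $\lambda<1$, one gets $\ee[e^{\lambda T}\mid N]=(1-\lambda)^{-N}$, hence $\ee[e^{\lambda T}]=\exp\!\bigl(a(\tfrac{1}{1-\lambda}-1)\bigr)=\exp\!\bigl(\tfrac{a\lambda}{1-\lambda}\bigr)$. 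With $\lambda=\tfrac14$ and $a\le 1$ this is at most $e^{1/3}<6$, which closes the estimate. (Equivalently this is a one-line application of Campbell's formula for the Laplace functional of a Poisson process.)

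The main obstacle I anticipate is precisely this moment-generating-function step. The tempting term-by-term moves — replacing $\Gamma_i$ by $\Gamma_1$, or by the independent block $\Gamma_i-\Gamma_1$, or by $E_i$ — all discard the fact that $\Gamma_i$ grows with $i$, and they produce an MGF of the form $M^{\ell(v)}$ with $M>1$, which is far too large for a bound uniform in $\ell(v)$. One genuinely needs that the $\Gamma_i$ are partial sums (Poisson arrival times), so that only $O(1)$ of them can be small; the Poisson-thinning representation is the clean way to exploit this. Everything else is routine: the constants $\lambda=\tfrac14$ and the split at $\ell(v)\log 2$ are dictated by the shape of the target bound, and the generous constant $6$ leaves ample slack for the crude inequality $-\log(1-e^{-x})\le\log 2+\log_+(1/x)$.
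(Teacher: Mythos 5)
Your proposal is correct in substance, but it reaches the bound by a genuinely different route than the paper. The paper makes the same first move as you in multiplicative form — it lower-bounds each factor via $\tfrac12\min(x,1)\le 1-e^{-x}$, picking up the $2^{-\ell(v)}$ loss — but then controls the resulting infinite product $B=\prod_{i\ge1}\min\bigl(-\sum_{k\le i}\log B'_{(j_1,\dots,j_k)},\,1\bigr)$ through a separate lemma (Lemma~\ref{lemma: beta bound}, following Bubeck et al.): it decomposes over the index $j$ at which the partial sums cross $1$, uses the Gamma density for the partial sums, represents the ratios as uniform order statistics, applies Markov at power $\tfrac12$, and combines via $\min(a,b)\le\sqrt{ab}$ to get $\prob\{B\le s\}\le 6s^{1/4}$. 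You instead work on the log scale, reduce to the tail of $T=\sum_i\log_+(a/\Gamma_i)$ with $\Gamma_i$ the arrival times of a rate-one Poisson process, and compute $\ee[e^{T/4}]=\exp\bigl(\tfrac{a}{3}\bigr)\le e^{1/3}$ exactly by Poisson thinning (or Campbell's formula); a Chernoff step at $\lambda=\tfrac14$ then gives $e^{1/3}\,2^{\ell(v)/4}e^{-t/4}$, which is even slightly sharper than the stated constant $6$. What your approach buys is a self-contained, cleaner treatment of the key estimate — it replaces the crossing-index decomposition and order-statistics computation with a one-line exact moment generating function, and, as you observe, applying Campbell directly to the full sum would even remove the $2^{\ell(v)/4}$ factor. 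One small repair: as written, the inequality $S\le\ell(v)\log 2+T$ for the \emph{infinite} sum $S$ is false, since points $\Gamma_i\ge a$ contribute positively to $S$ but nothing to $T$ and there is no bound on how many such terms there are; you should simply apply the pointwise bound $-\log(1-e^{-x})\le\log 2+\log_+(1/x)$ to the finite sum $\sum_{i=1}^{\ell(v)}-\log\bigl(1-e^{-\Gamma_i/a}\bigr)$, which is all the event requires, and the rest of your argument goes through verbatim.
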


\begin{lem}
\label{LemSecond}
For any $t > 0$, we have
\begin{equation}
\prob\left\{\exp(-t)
\leq 
\prod_{i = 1}^{\ell(v)} \prod_{k = 1}^{i} B_{(j_{1}, \ldots, j_{k})}'\right\}
\leq 
\exp\left(
\frac{\ell(v)}{2} - \frac{t}{\ell(v)(d - 2)} - \frac{\ell(v)}{2} \log \left(\frac{\ell(v)^{2}(d-2)}{2t}\right) 
\right).
\label{eqn: lemma 4 bound}
\end{equation}
\end{lem}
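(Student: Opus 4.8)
The plan is to pass to logarithms, recognize the event as a lower-tail event for a weighted sum of i.i.d.\ exponentials, and then apply a Chernoff bound. Write $\ell = \ell(v)$. Since $B'_{(j_1,\dots,j_k)}$ appears in the double product with multiplicity $\ell - k + 1$, we may rewrite
\[
\prod_{i=1}^{\ell}\prod_{k=1}^{i} B'_{(j_1,\dots,j_k)} \;=\; \prod_{k=1}^{\ell}\bigl(B'_{(j_1,\dots,j_k)}\bigr)^{\ell - k + 1},
\]
so after taking $-\log$ and relabelling, the event in \eqref{eqn: lemma 4 bound} becomes $\{Z \le t\}$ with $Z = \sum_{m=1}^{\ell} m\, G_m$, where the $G_m$ are i.i.d.\ copies of $-\log B'$. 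For $B'\sim\betavar\bigl(\tfrac{1}{d-2},1\bigr)$ one has $\prob\{B'\le x\}=x^{1/(d-2)}$, so $G_m$ is exponential with rate $\tfrac{1}{d-2}$, i.e.\ mean $d-2$; in particular $\ee[Z]=(d-2)\binom{\ell+1}{2}$, which is the scale at which the asserted bound stops being informative.

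For $\lambda>0$ the lower-tail Chernoff bound gives
\[
\prob\{Z\le t\}\;\le\;e^{\lambda t}\prod_{m=1}^{\ell}\ee\bigl[e^{-\lambda m G_m}\bigr]\;=\;e^{\lambda t}\prod_{m=1}^{\ell}\frac{1}{1+\lambda m(d-2)}.
\]
I would then feed in a convenient value of $\lambda$ of order $\ell/t$; a natural choice is $\lambda = \tfrac{\ell}{2t}-\tfrac{1}{\ell(d-2)}$, which is positive exactly when $t<\tfrac12\ell^2(d-2)$ and which makes $e^{\lambda t}=\exp\bigl(\tfrac{\ell}{2}-\tfrac{t}{\ell(d-2)}\bigr)$, already supplying the first two exponent terms of \eqref{eqn: lemma 4 bound}. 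For the product, the inequality $m/\ell\le 1$ gives $1+\lambda m(d-2) = \bigl(1-\tfrac{m}{\ell}\bigr)+\tfrac{m\ell(d-2)}{2t}\ge\tfrac{m\ell(d-2)}{2t}$, hence
\[
\prod_{m=1}^{\ell}\bigl(1+\lambda m(d-2)\bigr)\;\ge\;\Bigl(\tfrac{\ell(d-2)}{2t}\Bigr)^{\ell}\ell!\;\ge\;\Bigl(\tfrac{\ell^2(d-2)}{2et}\Bigr)^{\ell},
\]
using $\ell!\ge(\ell/e)^{\ell}$ in the last step. Multiplying the two estimates and simplifying produces an exponent of the stated shape $\tfrac{\ell}{2}-\tfrac{t}{\ell(d-2)}-\tfrac{\ell}{2}\log\tfrac{\ell^2(d-2)}{2t}$, up to lower-order terms; recovering exactly the constants in \eqref{eqn: lemma 4 bound} (in particular the factor $\tfrac{\ell}{2}$ rather than $\ell$ multiplying the logarithm) amounts to optimizing $\lambda$ a bit more carefully than the illustrative choice above, which is mechanical.

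Everything but the last step is routine --- the exponential moment generating function and the crude product bound are standard. The one point demanding care is the treatment of the free parameter $\lambda$: the true minimizer of the Chernoff exponent solves $t = \sum_{m}\tfrac{m(d-2)}{1+\lambda m(d-2)}$, which has no closed form, so one deliberately works with a suboptimal $\lambda$ and must then check that the lossy steps $1+\lambda m(d-2)\ge\lambda m(d-2)$ and $\ell!\ge(\ell/e)^{\ell}$ still leave enough room to reach the stated exponent. Finally, I would note that the inequality is only of interest --- and is only invoked, in the subsequent optimization over $t$ inside the proof of Theorem~\ref{thm: inv rumor centrality distance} --- for $t$ well below $\ee[Z]=\Theta(d\,\ell(v)^2)$, which is exactly the regime in which the argument above is valid.
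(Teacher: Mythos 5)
Your setup coincides with the paper's: the same rewriting of the double product with multiplicities $\ell-k+1$, the same reduction to a lower-tail bound for $Z=\sum_m m\,G_m$ with $G_m$ i.i.d.\ exponential of mean $d-2$ (equivalently, the paper computes $\ee[B_i^{s}]=\frac{1}{s(d-2)+1}$ directly), the same Chernoff inequality, and even the same value $\lambda=\frac{\ell}{2t}-\frac{1}{\ell(d-2)}$, which is exactly the paper's optimizer. You also correctly flag the constraint $t<\tfrac12\ell^2(d-2)$ needed for $\lambda>0$, which the paper passes over silently.

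The gap is in the last step, and your diagnosis of how to close it is off. Your product bound $1+\lambda m(d-2)\ge \frac{m\ell(d-2)}{2t}$ together with $\ell!\ge(\ell/e)^{\ell}$ yields the exponent $\frac{\ell}{2}-\frac{t}{\ell(d-2)}-\ell\log\frac{\ell^{2}(d-2)}{2et}$, which implies \eqref{eqn: lemma 4 bound} only when $\frac{\ell^{2}(d-2)}{2t}\ge e^{2}$; in the window $\frac{\ell^{2}(d-2)}{2e^{2}}<t<\frac{\ell^{2}(d-2)}{2}$ it is strictly weaker than the stated bound (it can even exceed $1$ while the right-hand side of \eqref{eqn: lemma 4 bound} is below $1$), so the lemma as stated is not recovered. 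Moreover, this slack does not come from a suboptimal $\lambda$: if you keep the bound $\sum_m\log\bigl(1+\lambda m(d-2)\bigr)\ge \ell\log\bigl(\lambda\ell(d-2)/e\bigr)$ and then optimize $\lambda$ exactly, the best exponent is $2\ell-\ell\log\frac{\ell^{2}(d-2)}{t}$, which still misses the target near $t\approx\tfrac12\ell^{2}(d-2)$; the loss was already incurred in discarding the ``$+1$'' and in Stirling. The paper avoids this by a different estimate on the sum: since $\log(1+x)/x$ is nonincreasing, $\sum_{m=1}^{\ell}\log\bigl(1+\lambda m(d-2)\bigr)\ge\frac{\ell+1}{2}\log\bigl(1+\lambda\ell(d-2)\bigr)\ge\frac{\ell}{2}\log\bigl(1+\lambda\ell(d-2)\bigr)$, and then minimizes $\lambda t-\frac{\ell}{2}\log\bigl(1+\lambda\ell(d-2)\bigr)$ exactly --- whose minimizer is precisely your $\lambda$ --- giving the exponent of \eqref{eqn: lemma 4 bound} verbatim, with the $\frac{\ell}{2}$ coefficient on the logarithm. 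So you need to replace your product/Stirling step by this (or an equivalent) bound; ``optimizing $\lambda$ more carefully'' alone cannot do it. (Your observation that the lemma is only invoked with $t=2\ell\log\ell\ll\ell^{2}(d-2)$ is fair, and there your bound is in fact at least as strong as the paper's, but it does not prove the lemma in the generality stated.)
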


Plugging inequalities~\eqref{eqn: lemma 3 bound} and~\eqref{eqn: lemma 4 bound} into inequality~\eqref{eqn: split beta prob}, we then obtain
\begin{align*}
  &\begin{aligned}
\limsup_{n \to \infty}
\prob\left\{\phi_{T_{n}}(v) \leq \phi_{T_{n}}(1)\right\} 
&\leq 
6 \cdot 2^{\frac{\ell(v)}{4}} \exp\left(-\frac{t}{4}\right)
\\ &\qquad + \exp\left(\frac{\ell(v)}{2} - \frac{t}{\ell(v)(d - 2)} - \frac{\ell(v)}{2} \log \left(\frac{\ell(v)^{2}(d-2)}{2t}\right) \right).
  \end{aligned}
\end{align*}
Taking \(t = 2 \ell(v) \log(\ell(v)),\) we have
\begin{align*}
  &\begin{aligned}
\limsup_{n \to \infty}
\prob\left\{\phi_{T_{n}}(v) \leq \phi_{T_{n}}(1)\right\}  
&\leq 
6 \exp\left( -\frac{\ell(v)}{2} \left(\log(\ell(v)) - \frac{\log(2)}{2}\right) \right)
\\& \qquad 
+ \exp\left(-\frac{\ell(v)}{2} \log\left(\frac{1}{e}\right) - \frac{\ell(v)}{2} \log\left(\frac{\ell(v)(d-2)}{4 \log(\ell(v))}\right)\right) \\ 
&\leq 
6 \exp\left(-\frac{\ell(v)}{2} \log\left(\frac{\ell(v)}{2}\right) \right) 
\\ &\qquad 
+ \exp\left(-\frac{\ell(v)}{2} \log\left(\frac{\ell(v)(d-2)}{4e \log(\ell(v))}\right) \right),
  \end{aligned}
\end{align*}
which completes the proof.


\section{Conclusion and Future Directions}
\label{SecDiscussion}

In this paper, we have provided confidence sets for diffusions on regular trees. In particular, we have provided bounds on the probability of error as a function of the size of the estimating sets selected by two diffusion source estimators, leading to upper bounds on the number of nodes required to guarantee that the source node lies in the resulting confidence sets. The main future research direction is to find confidence sets and corresponding bounds on error probabilities for more general trees and graphs, which would be more representative of the complicated topologies arising in real-world networks. Our example of asymmetry shows that a better estimator should take into account more aspects of the underlying graph structure, including degree inhomogeneity.

Another avenue of research is to find fast, local algorithms for obtaining confidence sets for the source of a diffusion, similar to the works of Borgs et al.~\cite{borgs} and Frieze and Pegden~\cite{frieze}. In practice, one might not have access to the entire set of infected nodes and/or the entire topology of the underlying network. In cases where the number of nodes \(n\) in the diffusion is very large, an \(O(n)\) algorithm for estimating the diffusion source could be prohibitively slow, and a faster algorithm would be preferable.

\section*{Acknowledgments}

We acknowledge Robin Pemantle for helpful comments and discussions.


\bibliography{networks}
\bibliographystyle{abbrv}

\newpage 
\begin{appendices}

\section{Supporting Lemmas for Theorem~\ref{thm: max subtree regular tree}}

In this appendix, we provide the proofs of the lemmas used to prove Theorem~\ref{thm: max subtree regular tree}.


\subsection{Proof of Lemma~\ref{LemDirichlet}}
\label{AppLemDirichlet}

Let \(E_{i, K}^{n}\) denote the number of edges between 
\(T_{i, K}^{n}\) and \(\mathcal{G} \setminus T_n\).
The key observation is that
\((E_{1, K}^{n}, \ldots, E_{K, K}^{n})\) evolves according to a P\'{o}lya urn with replacement matrix 
\((d - 2) I_{K}\).
Thus, we have the convergence in distribution
\begin{equation}
\frac{1}{n(d - 2) + 2}(|E_{1, K}^{n}|, \dots, |E_{K, K}^{n}|)
\longrightarrow
\dirichlet\left(\frac{d_{\GG}(1) - d_{T_{K}}(1)}{d - 2},
\ldots, \frac{d_{\GG}(K) - d_{T_{K}}(K)}{d - 2}\right),
\label{eqn: dirichlet convergence}
\end{equation}
as \(n \to \infty\), where we exclude component \(i\) from the above vectors if 
\(d_{\GG}(i) - d_{T_{K}}(i) = 0\).
Also note that we may write
\[
E_{i, K}^{n} 
= (d - 2) |T_{i, K}^{n}| - d_{T_{K}}(i) + 2,
\]
which implies that 
\[\
\lim _{n \to \infty} \frac{1}{n}|T_{i, K}^{n}| 
=
\lim _{n \to \infty} \frac{1}{n(d - 2) + 2} E_{i, K}^{n}.
\]
Thus, the limit of 
\(\frac{1}{n}(|T_{1, K}^{n}|, \ldots, |T_{K, K}^{n}|)\)
is the same Dirichlet random variable as in
equation~\eqref{eqn: dirichlet convergence}.


\subsection{Proof of Lemma~\ref{LemWtail}}
\label{AppLemWtail}

From Lemma~\ref{LemDirichlet}, we see that
\[
\frac{1}{n}(W_{k, K}^{n}, |T_{k, K}^{n}|)
\to 
\dirichlet\left(\frac{K(d-2) + 2 - (d_{\mathcal{G}}(k) - d_{T_{K}}(k))}{d - 2},  \frac{d_{\mathcal{G}}(k) - d_{T_{K}}(k)}{d - 2} \right).
\] 
For simplicity, denote the parameters of the Dirichlet distribution by \(c_{1, k}\) and \(c_{2, k}\).
Of course, this simply means that \(W_{k, K}^{n} / n\) has a beta distribution unless
\(d_{\mathcal{G}}(k) = d_{T_{K}}(k)\). 
In this case, we see that 
\(W_{k, K}^{n} = n - 1\), so \(W_{k, K}^{n} > (1 - \eta)n\) when \(n > 1 / \eta\). In the general case, we may write
\begin{align}
\label{EqnW}
  &\begin{aligned}
  \lim_{n \to \infty}
\prob\left\{W_{k, K}^{n} \leq (1 - \eta)n \right\} 
&= 
\frac{1}{\beta \left(c_{1, k}, c_{2, k}\right)} 
\int_{0}^{1 - \eta} x^{c_{1, k} - 1} (1 - x)^{c_{2, k} - 1} \; dx \\ 
&\leq 
\frac{1}{\beta(c_{1, k}, c_{2, k})} \int_{0}^{1 - \eta} x^{c_{1, k} - 1} \; dx \\ 
&=
\frac{1}{c_{1, k} \beta(c_{1, k}, c_{2, k})} 
(1 - \eta)^{c_{1, k}} \\ 
&\leq
\frac{1}{(K  - 1) \beta(c_{1, k}, c_{2, k})} (1 - \eta)^{K - 1 + \frac{1}{d - 2}}.
  \end{aligned}
\end{align}
We now find a lower bound for $\beta(c_{1,k}, c_{2,k})$.
By Stirling's approximation,
\begin{equation}
\sqrt{\frac{2 \pi}{x}}\left(\frac{x}{e}\right)^{x}
\leq \Gamma(x)
\leq \sqrt{\frac{2 \pi}{x}}\left(\frac{x}{e}\right)^{x} e^{\frac{1}{12 x}}.
\label{eqn: stirling inequalities}
\end{equation}
Furthermore, the gamma function is bounded below by $\frac{7}{8}$ on the positive real numbers. Hence,
\begin{align*}
  &\begin{aligned}
& \beta(c_{1, k}, c_{2, k})^{-1}
= \frac{\Gamma\left(K - 1 + \frac{d}{d - 2}\right)}{\Gamma\left(K - 1 + \frac{d_{T_{K}}(k)}{d - 2}\right) \Gamma\left(\frac{d - d_{T_{K}}(k)}{d - 2}\right)} \\ 
& \quad \leq 
\frac{
\sqrt{\frac{2 \pi}{K - 1 + \frac{d}{d - 2}}} \left(\frac{K - 1 + \frac{d}{d - 2}}{e}\right)^{K - 1 + \frac{d}{d - 2}} e^{\frac{1}{12\left(K - 1 + \frac{d}{d - 2}\right)}}
}{
\frac{7}{8}
\sqrt{\frac{2 \pi}{K - 1 + \frac{d_{T_{K}}(k)}{d - 2}}} 
\left(
\frac{K - 1 + \frac{d_{T_{K}}(k)}{d - 2}}{e}\right)^{K - 1 + \frac{d_{T_{K}}(k)}{d - 2}}
} \\ 
& \quad = 
\frac{8}{7}
\sqrt{\frac{K - 1 + \frac{d_{T_{K}}(k)}{d - 2}}{K - 1 + \frac{d}{d - 2}}}
e^{\frac{1}{12\left(K - 1 + \frac{d}{d - 2}\right)} - \frac{d - d_{T_{K}}(k)}{d - 2}} \frac{\left(K - 1 + \frac{d}{d - 2}\right)^{K - 1 + \frac{d}{d - 2}}}{\left(K - 1 + \frac{d_{T_{K}}(k)}{d - 2}\right)^{K - 1 + \frac{d_{T_{K}}(k)}{d - 2}}}\\ 
& \quad \le
\frac{8}{7} e^{[12(K - 1)]^{-1}}
\frac{\left(K - 1 + \frac{d}{d - 2}\right)^{K - 1 + \frac{d}{d - 2}}}{\left(K - 1\right)^{K - 1 + \frac{1}{d-2}}}.
  \end{aligned}
\end{align*}
We now examine the fraction in the last expression.
Denoting it by \(A\), we have
\begin{align*}
  &\begin{aligned}
A & =
\left(1 + \frac{\frac{d}{d - 2}}{K - 1}\right)^{K - 1}
\frac{(K + 1)^{\frac{d}{d - 2}}}{(K - 1)^{\frac{1}{d - 2}}} \\ 
& \le e^{\frac{d}{d - 2}} 
\frac{(K + 1)^{1 + \frac{2}{d - 2}}}{(K - 1)^{\frac{1}{d - 2}}}. 
  \end{aligned}
\end{align*}
In particular, we have
\[
\beta(c_{1, k}, c_{2, k})^{-1}
\leq C K^{1 + \frac{1}{d - 2}},
\]
for an appropriate constant \(C\). Plugging this result back into the bound~\eqref{EqnW}, we therefore obtain
\begin{align*}
  &\begin{aligned}
  \lim_{n \to \infty}
\prob\left\{W_{k, K}^{n} \leq (1 - \eta)n \right\}  
&\leq \frac{CK^{1 + \frac{1}{d - 2}}}{K - 1} (1 - \eta)^{K - 1 + \frac{1}{d - 2}} \\ 
&\leq C K^{1 + \frac{1}{d - 2}} (1 - \eta)^{K - 1 + \frac{1}{d - 2}},
  \end{aligned}
\end{align*}
which completes the proof.


\section{Supporting Lemmas for Theorem~\ref{thm: inv rumor centrality distance}}

In this appendix, we provide the proofs of the lemmas used to establish Theorem~\ref{thm: inv rumor centrality distance}.


\subsection{Proof of Lemma~\ref{LemBetaConv}}
\label{AppLemBetaConv}

Let \(E_{v}^{n}\) denote the number of edges in \(\GG \setminus (T_{n}, 1)_{v \downarrow}\).
We may write
\[
E_{v}^{n}
= (d - 2) |(T, 1)_{v \downarrow}| + 1,
\]
for \(v > 1\), and 
\[
E_{1}^{n}
= (d - 2)|T_{n}| + 2.
\] 
In particular, we see that 
\[
\lim_{n \to \infty}
\frac{|(T_{n}, 1)_{v \downarrow}|}{n}
=
\lim_{n \to \infty}
\frac{|E_{v}^{n}|}{(d - 2)n + 2}.
\]
Furthermore, note that 
\(\left(E_{(1)}^{n}, \ldots, E_{(d -1)}^{n}\right)\)
evolves according to a P\'{o}lya urn with replacement matrix \((d - 2)I_{d}\).
This means that
\[
\left(\frac{E_{(1)}^{n}}{(d - 2)n + 2}, \ldots, \frac{E_{(d-1)}^{n}}{(d - 2)n + 2}\right)
\longrightarrow 
\dirichlet\left(\frac{1}{d - 2}, \ldots, \frac{1}{d - 2}\right).
\]
Moreover, we may modify the analysis slightly to deduce the behavior of \(|E_{(j_{1}, \ldots, j_{i})}^{n}|\).
Let \(N_{(j_{1}, \ldots, j_{k})}^{n}\) denote the number of descendants of \((j_{1}, \ldots, j_{k})\) in the diffusion subtree \(T_{n}\).
Note that
\[
(d - 1) + (d - 2) N_{(j_{1}, \ldots, j_{k})}^{n}
= \sum_{i = 1}^{d - 1} E_{(j_{1}, \ldots, j_{k}, i)}^{n}
= E_{(j_{1}, \ldots, j_{k})}^{n}.
\]
Furthermore, \((E_{(j_{1}, \ldots, j_{k}, 1)}^{n}, \ldots, E_{(j_{1}, \ldots, j_{k}, d - 1)}^{n})\)
evolves according to a P\'{o}lya urn with replacement matrix 
\((d - 2) I_{d - 1}\), implying that
\begin{multline}
\left(\frac{E_{(j_{1}, \ldots, j_{k}, 1)}^{n}}{(d - 2)N_{(j_{1}, \ldots, j_{k})}^{n} + (d-1) }, 
\ldots, 
\frac{E_{(j_{1}, \ldots, j_{k}, d - 1)}^{n}}{(d - 2)N_{(j_{1}, \ldots, j_{k})}^{n} + (d-1) }\right) \\
\longrightarrow \dirichlet\left(\frac{1}{d - 2}, \ldots, \frac{1}{d - 2}\right),
\end{multline}
as \(N_{(j_{1}, \ldots, j_{k})}^{n} \to \infty\).
From this, we see that
\begin{align*}
\lim_{n \to \infty} \frac{E_{(j_{1}, \ldots, j_{\ell(v)})}^{n}}{(d-2)n} 
&=
\lim_{n \to \infty}
\left(\frac{E_{(j_{1})}^{n}}{(d-2)n + 2} \right) 
\cdot \left(\frac{E_{(j_{1}, j_{2})}^{n}}{E_{(j_{1})}^{n}} \right)
\cdots \left(\frac{E_{(j_{1}, \ldots, j_{\ell(v)})}^{n}}{E_{(j_{1}, \ldots, j_{\ell(v) - 1})}^{n}} \right) \\
& = \prod_{i = 1}^{\ell(v)} B_{(j_{1}, \ldots, j_{\ell(v)})}.
\end{align*}

This does not establish the independence of the random variables appearing in the product.
The independence is intuitively obvious, since the proportion of nodes in \(E^{n}_{(j_{1})}\) relative to the rest of the diffusion does not affect the proportion of nodes in \(E^{n}_{(j_{1}, j_{2})}\) relative to \(E^{n}_{(j_{1})}\).
A similar result is proved in Mauldin et al.~\cite{mau92}, but the proof uses a theorem of de Finetti.
We provide a more direct proof here.
To minimize notation, we consider the case \(\ell(v) = 2\), since the result may easily be extended via induction to products of arbitrary length.

We want to show that
\begin{equation}
\prob\left\{B_{(j_{1})} \leq x, \; B_{(j_{1}, j_{2})} \leq y\right\}
= 
\prob\left\{B_{(j_{1})} \leq x\right\}
\prob\left\{B_{(j_{1}, j_{2})} \leq y\right\}.
\label{eqn: ind to prove}
\end{equation}

First, define the stopping times \(\tau_{i}\) by
setting \(\tau_{0} = \min\{n: E^{n}_{(j_{1})} = d - 1\}\) and
\[
\tau_{i} 
= \min\left\{n > \tau_{i - 1}: E^{n}_{(j_{1})} > E^{n - 1}_{(j_{1})}\right\}.
\]
Thus, the \(\tau_{i}\) are the times at which nodes are added to the subtree \((T_{n}, 1)_{(j_{1}) \downarrow}.\)

Define the random vector
\begin{equation*}
R_{n} = \left\{E^n_{(j_{1}', \ldots, j_{m}')}: m \geq 0, \text{ and } (j_{1}', j_{2}') \neq (j_{1}, j), \text{ for any } j = 1, \ldots, d - 1\right\}.
\end{equation*}
In words, \(R_{n}\) captures the state of the tree at time \(n\), excluding knowledge of the subtrees of \((T_{n}, 1)_{(j_{1}) \downarrow}\).
We can now define the \(\sigma\)-field 
\[
\ff_{R, n} = \sigma\left(R_{1}, \ldots, R_{n}\right).
\]
We will be interested in \(\ff_{R, \tau_{k}}\).
Note that the events which are measurable with respect to \(\ff_{R, \tau_{k}}\) are those in which \(k\) nodes have been added to the subtree of \((T_{n}, 1)_{(j_{1}) \downarrow}\), so that
\(|(T_{n}, 1)_{(j_{1}) \downarrow}| = k + 1\),
and the growth of the diffusion subtree \(T_{n}\) may be specified in any arbitrary manner outside of \((T_{n}, 1)_{(j_{1})}.\)

Now define the random variable 
\[
Y_{n} 
= \frac{E^{n}_{(j_{1}, j_{2})}}{E^{n}_{(j_{1})}},
\]
and define the \(\sigma\)-field
\[
\ff_{Y, \tau_{k}}
= \sigma\left(Y_{\tau_{1}}, \ldots, Y_{\tau_{k}}\right).
\]
The events that are measurable with respect to \(\ff_{Y, \tau_{k}}\) are exactly the events which specify the values of \(Y_{\tau_{i}}\), for \(i \leq k\).

Hence, we have defined \(\sigma\)-fields for events concerning the growth of \(T_{n}\) outside the subtrees 
\((T_{n}, 1)_{(j_{1}, j) \downarrow}\), for \(j = 1, \ldots, d - 1\),
and events concerning the growth of \((T_{n}, 1)_{(j_{1}, j_{2})\downarrow}\) relative to \((T_{n}, 1)_{(j_{1})\downarrow}\).
Now consider \(A \in \ff_{R, \tau_{k}}\) and \(B \in \ff_{Y, \tau_{k}}\).
We want to show that \mbox{\(\prob\left(A \cap B\right) = \prob\left(A\right) \prob\left(B\right)\),}
since equation~\eqref{eqn: ind to prove} then follows easily.

Suppose \(A\) and \(B\) are nonempty.
We can view elements of \(A\) and \(B\) as tree histories \(H\) in the sample space \(\Omega\) in the following sense:
Let 
\[
H_{n}
= (E^{n}_{\emptyset}, E^{n}_{(1)}, \ldots, E^{n}_{(d - 1)}, E^{n}_{(1, 1)}, \ldots)
\]
be the state of the tree at time \(n\).
A history \(H\) is a sequence \(H = (H_{1}, H_{2}, \ldots)\)
such that it is possible to obtain the tree \(T_{n + 1}(H)\) from \(T_{n}(H)\), if we consider \(T_{n + 1}\) and \(T_{n}\) as functions of histories, by adding the appropriate vertex to the diffusion.
Thus, the measure \(\prob\) assigns to sets of histories some probability according to the evolution of the diffusion process.

Now suppose \(H\) is a history in \(A\).
Since \(A \in \ff_{R, \tau_{k}}\), 
there exist \(k\) times \(\nu_{1}, \ldots, \nu_{k}\) at which a node is added to \((T_{n}, 1)_{(j_{1}) \downarrow}\).
First, note that \(A\) contains all histories \(H'\) such that \(H_{i}' = H_{i}\), for \(i = 1, \ldots, \nu_{k}\),
since the growth of the tree after the addition of the \(k^\text{th}\) node is non-measurable with respect to \(\ff_{R, \tau_{k}}\).
Consequently, we define
\[
C_{H} 
= \{H' : H_{i}' = H_{i}, \; i = 1, \ldots, \nu_{k}\},
\]
the set of histories which may differ from $H$ only after time $\nu_k$.

Second, observe that \(A\) must also contain every history \(H''\) such that
\begin{equation}
R(H''_{i}) = R(H_{i}),
\label{eqn: er1}
\end{equation} 
for \(i = 1, \ldots, \nu_{k}\), where we view the vector $R$ of subtree counts defined above as a function of histories
In other words, \(H''\) agrees with \(H'\) except possibly on the subtrees of \((T_{n}, 1)_{(j_{1})\downarrow}\).
Note that equation~\eqref{eqn: er1} defines an equivalence relation between histories.
In particular, we may partition the set \(A\) into equivalence classes.
Let \(\ppp\) denote the partition and \(\ccc\) denote an equivalence class, so that
\[
A 
= \bigsqcup_{\ccc \in \ppp} \ccc.
\]

We may similarly partition \(B\) into sets of histories.
Let \(H\) be a history in \(B\).
Again, using \(\nu_{1}, \ldots, \nu_{k}\) to denote the times when nodes are added to subtrees of \((T_{n}, 1)_{(j_{1})\downarrow}\), we see that
the set \(B\) must contain \(C_{H}\). Furthermore, \(B\) must contain any history \(H'\) with the following property:
Let \(\nu'_{1} \ldots, \nu'_{k}\) denote the times when nodes are added to subtrees of \((T_{n}, 1)_{(j_{1})\downarrow}\) in \(H'\).
Then \(H'\) is in \(B\) if
\begin{equation}
Y_{\nu'_{i}}(H')
= 
Y_{\nu_{i}}(H),
\label{eqn: er2}
\end{equation}
for \(i = 1, \ldots, k\).
It is easy to see that equation~\eqref{eqn: er2} also defines an equivalence relation and partitions \(B\) into equivalence classes.
Hence, we may write
\[
B = \bigsqcup_{\ddd \in \qqq} \ddd.
\]
In particular, we have
\begin{align*}
  &\begin{aligned}
A \cap B 
&= 
\left(\bigsqcup_{\ccc \in \ppp} \ccc \right) \cap  \left(\bigsqcup_{\ddd \in \qqq} \ddd \right) \\ 
&= \bigsqcup_{\ccc \in \ppp} \bigsqcup_{\ddd \in \qqq} ( \ccc \cap \ddd ).
  \end{aligned}
\end{align*}
It now suffices to show that
\(\prob( \ccc \cap \ddd ) = \prob(\ccc) \prob(\ddd)\), 
since additivity then implies that we can obtain
\mbox{\(\prob(A \cap B) = \prob\left(A\right) \prob\left(B\right)\).}
Pick some set \(\ccc \cap \ddd\) and some \(H\) in \(\ccc \cap \ddd\), and let $\nu_1, \dots, \nu_k$ denote the times when a node is added to the subtree
\((T_{n}, 1)_{(j_{1})\downarrow}\) in $H$.

The value of \(Y_{\nu_{i}}(H)\) must be fixed for each \(i \leq k\), and all histories \(H'\) with \mbox{\(\nu'_{i} = \nu_{i}\)} and 
\mbox{\(Y_{\nu'_{i}}(H') = Y_{\nu_{i}}(H)\)} must also lie in \( \ccc \cap \ddd\).
In fact, \(H\) is only permitted to vary after \(\nu_{k}\), on the subtrees \((T_{n}, 1)_{(j_{1}, j'_{2},  \ldots, j'_{m})\downarrow}\),
for \(m \geq 2\) and \(j'_{2} \neq j_{2}\), and on the subtrees \((T_{n}, 1)_{(j_{1}, j_{2}, j'_{3}, \ldots, j'_{m})\downarrow}\), for \(m \geq 3\).

Now, we simply realize that \(\ccc \cap \ddd\) specifies \((R_{i}, Y_{i})\), \(\ccc\) specifies \(R_{i}\), and \(\ddd\) specifies \(Y_{i}\).
Furthermore, \(R_{i}\) depends only on \(R_{i - 1}\) and \(Y_{\tau_{i}}\) depends only on \(Y_{\tau_{i - 1}}\). It follows that $\prob(\ccc \cap \ddd) = \prob(\ccc) \prob(\ddd)$, implying that \(\prob\left(A \cap B\right) = \prob\left(A\right) \prob\left(B\right)\), as well.
Hence, \(\ff_{R, \tau_{k}}\) and \(\ff_{Y, \tau_{k}}\) are independent.

In particular, if we let \(B_{(j_{1})}\) and \(B_{(j_{1}, j_{2})}\) denote the limiting beta-distributed random variables, we have
\begin{align*}
  &\begin{aligned}
\prob\left\{B_{(j_{1})} \leq x, \; B_{(j_{1}, j_{2})} \leq y\right\} 
&=
\lim_{k \to \infty} 
\prob\left\{\frac{E^{\tau_{k}}}{(d - 2)n + 2} \leq x, \; Y_{\tau_{k}} \leq y\right\} \\ 
&=
\lim_{k \to \infty}
\prob\left\{\frac{E^{\tau_{k}}}{(d - 2)n + 2} \leq x\right\} 
\prob\left\{Y_{\tau_{k}} \leq y\right\} \\ 
&=
\prob\left\{B_{(j_{1})} \leq x\right\}  
\prob\left\{B_{(j_{1}, j_{2})} \leq y\right\}. 
  \end{aligned}
\end{align*}
Thus, the two random variables are independent, which is what we wanted to show.



\subsection{Proof of Lemma~\ref{LemFirst}}
\label{AppLemFirst}

We use the fact that \(\frac{1}{2}\min(x, 1) \leq 1 - \exp(-x)\), for \(x \geq 0\).
This gives
\begin{align*}
  &\begin{aligned}
\frac{1}{2^{\ell(v)}} 
\prod_{i = 1}^{\ell(v)} \min\left(-\sum_{k = 1}^{i} \log(B_{(j_{1}, \ldots, j_{k})}' ), \; 1\right)
&\leq
\prod_{i = 1}^{\ell(v)} \left(1 - \prod_{k = 1}^{i} B_{(j_{1}, \ldots, j_{k})}'\right), 
  \end{aligned}
\end{align*}
from which we obtain
\[
\frac{1}{2^{\ell(v)}} B
\leq 
\prod_{i = 1}^{\ell(v)} \left(1 - \prod_{k = 1}^{i} B_{(j_{1}, \ldots, j_{k})}'\right), 
\]
where 
\(B = \prod_{i = 1}^{\infty}  \min\left(-\sum_{k = 1}^{i} \log(B_{(j_{1}, \ldots, j_{k})}' ), \; 1\right).\)

Combining this bound with Lemma \ref{lemma: beta bound} in Appendix~\ref{AppAuxiliary}, we then have
\begin{align*}
\begin{aligned}
\prob\left\{\prod_{i = 1}^{\ell(v)} \left(1 - \prod_{k = 1}^{i} B_{(j_{1}, \ldots, j_{k})}'\right) 
\leq 
\exp(-t) \right\}
&\leq 
\prob\left\{\frac{1}{2^{\ell(v)}} B \leq \exp(-t)\right\} \\ 
&\leq 
6 \cdot 2^{\frac{\ell(v)}{4}} \exp\left(-\frac{t}{4}\right),
\end{aligned}
\end{align*}
as desired.


\subsection{Proof of Lemma~\ref{LemSecond}}
\label{AppLemSecond}

Observe that
\begin{align}
\label{EqnChernoff}
  &\begin{aligned}
\prob\left\{\exp(-t)
\leq 
\prod_{i = 1}^{\ell(v)} \prod_{k = 1}^{i} B_{(j_{1}, \ldots, j_{k})}'\right\}
&=
\prob\left\{
-\sum_{i = 1}^{\ell(v)} \sum_{k = 1}^{i} \log( B_{(j_{1}, \ldots, j_{k})}' )
\leq t
\right\} \\  
&=  
\prob\left\{
-\sum_{i = 1}^{\ell(v)} [1 + \ell(v) - i]  \log(B_{(j_{1}, \ldots, j_{i})}' )
\leq t
\right\}.
  \end{aligned}
\end{align}
We now use a Chernoff bound. Let $P$ denote the expression on the right-hand side of inequality~\eqref{EqnChernoff}, and let $\ell = \ell(v)$. For $\lambda > 0$, we have


\begin{align*}
  &\begin{aligned}
P
&\leq 
\expect \exp\left(\lambda t - \left(-\sum_{i = 1}^{\ell} \lambda[1 + \ell - i]  \log(B_{i})\right)\right) \\ 
&= 
\exp\left(\lambda t\right)
\prod_{i = 1}^{\ell} 
\expect\exp\left(\lambda [1 + \ell - i]  \log(B_{i})\right) \\
&= 
 \exp\left(\lambda t\right)
\prod_{i = 1}^{\ell}  
\expect\left[B_{i}^{\lambda[1 + \ell - i]}\right].
  \end{aligned}
\end{align*}
Now, we compute the moments as
\begin{align*}
  &\begin{aligned}
\expect\left[B_{i}^{\lambda[1 + \ell - i]}\right]
&= \frac{1}{\beta\left(\frac{1}{d - 2}, 1\right)}  \int_{0}^{1} x^{\lambda[1 + \ell - i] + \frac{1}{d - 2} - 1} \; dx\\ 
&= \frac{1}{d - 2} \left(\frac{1}{\lambda[1 + \ell - i] + \frac{1}{d - 2}}\right) \\ 
&= \frac{1}{\lambda[1 + \ell - i](d - 2) + 1}.
  \end{aligned}
\end{align*}
Thus, we have
\[
P
\leq 
\exp\left(\lambda t
- \sum_{i = 1} \log\left(\lambda[1 + \ell - i](d - 2) + 1\right)
\right).
\]
Using the fact that \(\log(x + 1) / x\) is nonincreasing when \(x \geq 0\), we may further write
\begin{align*}
  &\begin{aligned}
P 
& \le
\exp\left(
\lambda t - \sum_{i = 1}^{\ell} \lambda[1 + \ell - i](d - 2) 
\frac{\log(\lambda[1 + \ell - i](d - 2) + 1)}{\lambda[1 + \ell - i](d - 2)}
\right) \\ 
&\leq 
\exp\left(
\lambda t 
- \sum_{i = 1}^{\ell} \lambda[1 + \ell - i](d - 2) \frac{\log(\lambda \ell (d - 2) + 1)}{\lambda \ell (d - 2)}
\right) \\ 
&\leq 
\exp\left(
\lambda t - \frac{1}{\ell + 1} \log \left(\lambda \ell (d - 2) + 1)\right) 
\sum_{i = 1}^{\ell} [1 + \ell - i]
\right) \\ 
&= 
\exp\left(
\lambda t - \frac{\ell}{2} \log \left(\lambda \ell (d - 2) + 1 \right)
\right).
  \end{aligned}
\end{align*}
Let \(g(\lambda)\) denote the logarithm of the last expression as a function of \(\lambda\).
We now find \(\lambda > 0\) to minimize \(g(\lambda)\).
Setting \(g'(\lambda^{*}) = 0\) and solving, we obtain
\[
\lambda^{*}
= 
\frac{\ell}{2t} - \frac{1}{\ell (d - 2)}.
\]
Since \(g''(\lambda) > 0\) for all \(\lambda > 0\),
we see that \(\lambda^{*}\) minimizes \(g\).
Substituting this into our bound on \(P\), we then have
\begin{align*}
  &\begin{aligned}
P 
&\leq 
\exp\left(
\frac{\ell}{2} - \frac{t}{\ell (d - 2)} 
- \frac{\ell}{2} \log\left(\frac{\ell^{2}(d-2)}{2t} \right)
\right),
  \end{aligned}
\end{align*}
which completes the proof of the lemma.



\section{Proofs of Corollaries}
\label{AppCors}

In this appendix, we provide the proofs of Corollaries~\ref{CorConfidence} and~\ref{CorPhiEst}.

\subsection{Proof of Corollary~\ref{CorConfidence}}
\label{AppCorConfidence}

From the bound of Theorem~\ref{thm: max subtree regular tree}, it suffices to choose $\eta \in (0,1)$ and $K$ large enough such that
\begin{equation*}
C_1 \eta^{1 + \frac{1}{d-2}} + C_2 K^{2 + \frac{1}{d-2}} (1-\eta)^{K-1+\frac{1}{d-2}} \le \epsilon.
\end{equation*}
Suppose $\epsilon < 2C_1$. We will take $\eta = \left(\frac{\epsilon}{2C_1}\right)^{\frac{d-2}{d-1}}$, so $C_1 \eta^{1 + \frac{1}{d-2}} \le \frac{\epsilon}{2}$. Taking logarithms, it now suffices to show that
\begin{equation*}
\left(2+ \frac{1}{d-2}\right) \log K + \left(K - 1 + \frac{1}{d-2}\right) \log\left(1 - \left(\frac{\epsilon}{2C_1}\right)^{\frac{d-2}{d-1}}\right) \le \log\left(\frac{\epsilon}{2C_2}\right).
\end{equation*}
Clearly, this holds provided
\begin{equation}
\label{EqnKsplit}
3\log K - \frac{K}{2} \left(\frac{\epsilon}{2C_1}\right)^{\frac{d-2}{d-1}} \le \log \left(\frac{\epsilon}{2C_2}\right),
\end{equation}
since $d \ge 3$ and $\log(1-x) \le -x$ for $x \in (0,1)$. We will now show how to choose $K$ such that
\begin{subequations}
\begin{equation}
\label{EqnK1}
3 \log K \le \frac{K}{4} \left(\frac{\epsilon}{2C_1}\right)^{\frac{d-2}{d-1}},
\end{equation}
and
\begin{equation}
\label{EqnK2}
\log\left(\frac{2C_2}{\epsilon}\right) \le \frac{K}{4} \left(\frac{\epsilon}{2C_1}\right)^{\frac{d-2}{d-1}}.
\end{equation}
\end{subequations}
Clearly, inequality~\eqref{EqnKsplit} will follow from combining the bounds~\eqref{EqnK1} and~\eqref{EqnK2}. Using the fact that $\log K \le K^{\frac{1}{d-1}}$ for sufficiently large $K$, it is easy to see that inequality~\eqref{EqnK1} holds provided \mbox{$K \ge \frac{2C_1}{\epsilon} \cdot 12^{\frac{d-1}{d-2}}$}. Furthermore, using the fact that $\log\left(\frac{2C_2}{\epsilon}\right) \le \left(\frac{2C_2}{\epsilon}\right)^{\frac{1}{d-1}}$ for sufficiently small $\epsilon$, inequality~\eqref{EqnK2} holds provided $K \ge \frac{4}{\epsilon} \cdot (2C_1)^{\frac{d-2}{d-1}} (2C_2)^{\frac{1}{d-1}}$. We thus arrive at the desired result.


\subsection{Proof of Corollary~\ref{CorPhiEst}}
\label{AppCorPhiEst}

Suppose there is a vertex \(v\) such that \(\ell(v) > L\) and \(\phi_{T_{n}}(v) \leq \phi_{T_{n}}(1)\).
It suffices to bound the probability that \(\phi_{T_{n}}(v) \leq \phi_{T_{n}}(1)\), since if this happens with low probability,
then the probability that \(v\) is the rumor center is also low. 
Our plan is to use a sort of monotonicity to consider vertices far from the source at some fixed distance, rather than every distant vertex. We begin with the following lemma:

\begin{lem}
Let \(u\) and \(v\) be vertices such that \(u \neq 1\) and \(v\) is in the subtree \((T, 1)_{u \downarrow}\).
If \(\phi_{T}(v) \leq \phi_{T}(1),\) we also have \(\phi_{T}(u) \leq \phi_{T}(1)\).
\label{lemma: monotonicity of rc}
\end{lem}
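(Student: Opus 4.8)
The plan is to compare the subtree product $\phi_T(v)$ against $\phi_T(u)$ and $\phi_T(1)$ directly, by exploiting the telescoping structure of $\phi_T$ along the path from $1$ to $u$ to $v$. Recall from the discussion preceding Theorem~\ref{thm: inv rumor centrality distance} that for any vertex $w$ on the path from the root $1$, the condition $\phi_T(w) \le \phi_T(1)$ is equivalent to
\[
\prod_{i=1}^{\ell(w)} \bigl(n - |(T,1)_{w_i \downarrow}|\bigr) \le \prod_{i=1}^{\ell(w)} |(T,1)_{w_i \downarrow}|,
\]
where $w_1, \dots, w_{\ell(w)} = w$ are the vertices on the path from $1$ to $w$, since moving the root one step from $w_{i-1}$ to $w_i$ replaces the factor $|(T,1)_{w_i \downarrow}|$ by $n - |(T,1)_{w_i \downarrow}|$ and leaves all other factors unchanged. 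Here $n = |T|$.

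First I would set up notation: write $v = (j_1, \dots, j_{\ell(v)})$ and let $u = (j_1, \dots, j_m)$ with $m = \ell(u) < \ell(v)$, so $u$ lies on the path from $1$ to $v$. Let $a_i = |(T,1)_{(j_1,\dots,j_i) \downarrow}|$ for $i = 1, \dots, \ell(v)$. Then the hypothesis $\phi_T(v) \le \phi_T(1)$ reads $\prod_{i=1}^{\ell(v)} (n - a_i) \le \prod_{i=1}^{\ell(v)} a_i$, and the desired conclusion $\phi_T(u) \le \phi_T(1)$ reads $\prod_{i=1}^{m} (n - a_i) \le \prod_{i=1}^{m} a_i$. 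The key structural observation is that the $a_i$ are nonincreasing in $i$ — each subtree contains the next one, so $a_1 \ge a_2 \ge \dots \ge a_{\ell(v)}$ — and hence each factor $n - a_i$ is nondecreasing while each factor $a_i$ is nonincreasing.

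The main step is then an elementary comparison. Suppose for contradiction that $\phi_T(u) > \phi_T(1)$, i.e. $\prod_{i=1}^m (n - a_i) > \prod_{i=1}^m a_i$. I want to show this forces $\prod_{i=1}^{\ell(v)} (n-a_i) > \prod_{i=1}^{\ell(v)} a_i$, contradicting the hypothesis. It suffices to show that for each $i > m$, the "new" factor satisfies $n - a_i \ge a_i$, equivalently $a_i \le n/2$; then the product of the extra $(n - a_i)$ factors dominates the product of the extra $a_i$ factors, and multiplying the two inequalities gives the contradiction. Now $a_i \le a_{m+1}$ for $i \ge m+1$, so it is enough to verify $a_{m+1} \le n/2$. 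This follows from the assumed inequality $\prod_{i=1}^m (n-a_i) > \prod_{i=1}^m a_i$: since the $a_i$ are nonincreasing, $a_m$ is the smallest of $a_1, \dots, a_m$, so if $a_m > n/2$ we would have $a_i > n/2 > n - a_i$ for all $i \le m$, contradicting $\prod (n - a_i) > \prod a_i$. Hence $a_{m+1} \le a_m \le n/2$, completing the argument.

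The main obstacle — really the only subtle point — is making the case analysis airtight: one must be careful about the boundary case $a_m = n/2$ (where the inequalities may be equalities) and about whether strict versus non-strict inequalities propagate correctly through the products, but since we only need the non-strict conclusion $\phi_T(u) \le \phi_T(1)$ from the non-strict hypothesis $\phi_T(v) \le \phi_T(1)$, a clean way to organize it is: if $a_{m+1} \le n/2$ then $n - a_i \ge a_i$ for all $i > m$, so $\prod_{i=m+1}^{\ell(v)} (n-a_i) \ge \prod_{i=m+1}^{\ell(v)} a_i$, and combined with $\prod_{i=1}^{\ell(v)}(n-a_i) \le \prod_{i=1}^{\ell(v)} a_i$ we cancel to get $\prod_{i=1}^m (n-a_i) \le \prod_{i=1}^m a_i$; and if instead $a_{m+1} > n/2$, then since $a_1 \ge \cdots \ge a_m \ge a_{m+1} > n/2$ we directly get $\prod_{i=1}^m(n - a_i) < \prod_{i=1}^m a_i$. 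Either way $\phi_T(u) \le \phi_T(1)$, which is the claim.
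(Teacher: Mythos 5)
Your proof is correct and follows essentially the same route as the paper's: both reduce the claim via the identity~\eqref{eqn: phiv condition} to comparing $\prod_i (n-a_i)$ with $\prod_i a_i$ along the path, and both exploit the monotonicity of the subtree sizes $a_i$ (equivalently, that the ratios $(n-a_i)/a_i$ are increasing in $i$). The only difference is organizational: the paper peels off one trailing factor at a time by induction, whereas you drop all factors beyond $u$ in one step via the case split on $a_{m+1}$ versus $n/2$, which is the same underlying observation.
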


\begin{proof}
Using our alternative notation, let \(v = (j_{1}, \ldots, j_{\ell(v)})\). We will show that
\begin{equation}
\label{EqnMonotone}
\varphi_T(j_1, \dots, j_k) \le \varphi_T(1), \qquad \forall k \le \ell(v).
\end{equation}
We proceed by induction. Note that the claim holds for $k = \ell(v)$ by assumption. Now suppose the claim holds for $k = K$, where $K \le \ell(v)$. By equation~\eqref{eqn: phiv condition}, this implies
\[ 
\prod_{i = 1}^{k} \frac{n - |(T, 1)_{(j_{1}, \ldots, j_{i}) \downarrow}|}{|(T, 1)_{(j_{1}, \ldots, j_{i}) \downarrow}|}
\leq 1.
\] 
For simplicity, call these multiplicands \(f(i).\)
The key observation is that as \(i\) increases, the subtree size \(|(T, 1)_{(j_{1}, \ldots, j_{i}) \downarrow}|\) decreases.
Thus, the function \(f(i)\) is strictly increasing.

First suppose \(f(K - 1) \leq 1.\) 
Then \(\prod_{i = 1}^{K - 1} f(i) \leq 1\), implying that 
\(\phi_{T}(j_1, \dots, j_{K-1}) \leq \phi_{T}(1)\).
On the other hand, if \(f(K - 1) \geq 1,\) then \(f(K) \geq 1\), also implying that
\[
\prod_{i = 1}^{K - 1} f(i) 
\leq 
\prod_{i = 1}^{K} f(i) 
\leq 1.
\]
This implies the claim~\eqref{EqnMonotone} and completes the induction.
\end{proof}


By Lemma \ref{lemma: monotonicity of rc},
there is a vertex \(u\) such that \(\ell(u) = L\), 
the vertex \(v\) is contained in \((T, 1)_{u \downarrow}\), and 
\(\phi_{T_{n}}(u) \leq \phi_{T_{n}}(1)\).
Hence,
\begin{multline*}
\limsup_{n \to \infty}
\prob\left\{\exists v: \ell(v) > L \text{ and } \phi(v) \leq \phi(1)\right\} \leq 
\prob\left\{\exists u: \ell(u) = L \text{ and } \phi(u) \leq \phi(1)\right\} \\
\leq 7|u: \ell(u) = L|
\exp\left(-\frac{L}{2} \log\left(\min\left\{\frac{L(d-2)}{4e \log(L)}, \frac{L}{2}\right\} \right)\right),
\end{multline*}
where the second inequality follows from Theorem~\ref{thm: inv rumor centrality distance}. Observing that 
\(|u: \ell(u) = L| \le d^{L}\)  and using some simple algebra completes the proof of inequality~\eqref{EqnConfWidth}.

Now, we derive our bound on the size of a $1-\epsilon$ confidence set. Clearly, it suffices to find $L$ sufficiently large such that the following two inequalities hold:
\begin{subequations}
\begin{equation}
\label{EqnL1}
7 \exp\left(-\frac{L}{2} \log \left(\frac{L}{4e d^2 \log(L)}\right) \right) \le \epsilon,
\end{equation}
\begin{equation}
\label{EqnL2}
7 \exp\left(-\frac{L}{2} \log\left(\frac{L}{2d^2}\right)\right) \le \epsilon.
\end{equation}
\end{subequations}
We first consider inequality~\eqref{EqnL1}, which is equivalent to
\begin{equation}
L \log\left(\frac{L}{4e d^2\log(L)}\right) \ge 2 \log\left(\frac{7}{\epsilon}\right).
\label{eqn: error eps}
\end{equation}
Using the fact that $\log L \le \sqrt{L}$, inequality~\eqref{eqn: error eps} is true provided $L
\geq 4 \frac{\log\left(\frac{7}{\epsilon}\right)}{\log\left(\frac{L}{C}\right)}$, or
\begin{equation*}
\frac{L}{C} \log\left(\frac{L}{C}\right) \ge 4 \log\left(\frac{7}{\epsilon}\right),
\end{equation*}
where \(C = 16 e^{2} d^4.\) We claim that this holds if
\begin{equation}
\label{EqnLCbd}
\frac{L}{C} \ge \frac{8 \log\left(\frac{7}{\epsilon}\right)}{\log \left(4 \log\left(\frac{7}{\epsilon}\right)\right)}.
\end{equation}
Indeed, under inequality~\eqref{EqnLCbd}, we have
\begin{equation*}
\frac{L}{C} \log\left(\frac{L}{C}\right) \ge \frac{8\log\left(\frac{7}{\epsilon}\right)}{\log\left(4 \log\left(\frac{7}{\epsilon}\right)\right)} \log\left(\frac{8 \log \left(\frac{7}{\epsilon}\right)}{\log\left(4 \log \left(\frac{7}{\epsilon}\right)\right)}\right).
\end{equation*}
Using the bound $\log x \le 2 \sqrt{x}$, with $x = 4 \log\left(\frac{7}{\epsilon}\right)$, we then have
\begin{equation*}
\frac{L}{C} \log\left(\frac{L}{C}\right) \ge \frac{8 \log\left(\frac{7}{\epsilon}\right)}{\log\left(4 \log\left(\frac{7}{\epsilon}\right)\right)} \cdot \frac{1}{2} \log\left(4 \log\left(\frac{7}{\epsilon}\right)\right) = 4 \log\left(\frac{7}{\epsilon}\right),
\end{equation*}
as wanted. To establish inequality~\eqref{EqnL2}, we need to show that
\begin{equation*}
\frac{L}{2d^2} \log\left(\frac{L}{2d^2}\right) \ge \frac{1}{d^2} \log\left(\frac{7}{\epsilon}\right).
\end{equation*}
By a similar argument as before, this holds provided
\begin{equation*}
\frac{L}{2d^2} \ge \frac{\frac{2}{d^2} \log\left(\frac{7}{\epsilon}\right)}{\log\left(\frac{1}{d^2} \log\left(\frac{7}{\epsilon}\right)\right)}.
\end{equation*}
The desired result then follows.

\section{Proofs of Propositions in Section~\ref{SecAsymmetry}}

In this appendix, we prove the propositions stated in Section~\ref{SecAsymmetry}, concerning the behavior of the diffusion estimators on asymmetric graphs. Several supporting results are stated and proved in Appendix~\ref{AppSupporting}.


\subsection{Proof of Proposition~\ref{prop: phi psi bad}}
\label{AppPropBad}

Let \(v = (j_{1}, \ldots, j_{\ell(v)})\) under the alternative labeling scheme.
From the argument in the proof of Theorem~\ref{thm: inv rumor centrality distance}, we know that
\[
\prob\left\{\phi_{T_{n}}(v^{*}) < \phi_{T_{n}}(1) \right\}
=
\prob\left\{\prod_{i = 1}^{\ell(v)}\left(1 - \frac{1}{n}|(T_{n}, 1)_{(j_{1}, \ldots, j_{i}) \downarrow}|\right) < \prod_{i = 1}^{\ell(v)}\frac{1}{n}|(T_{n}, 1)_{(j_{1}, \ldots, j_{i}) \downarrow}| \right\}.
\]
If we take the limit superior as \(n\) goes to infinity on each side,
each of the \(\frac{|(T_{n}, 1)_{(j_{1}, \ldots, j_{i})}|}{n}\) terms tends to \(1\) almost surely, by Lemma~\ref{lemma: subtree limit sizes}.
Thus,
\[
\limsup_{n \to \infty}
\prob\left\{\phi_{T_{n}}(v^{*}) < \phi_{T_{n}}(1) \right\}
=
\prob\left\{\prod_{i = 1}^{\ell(v)}(1 - 1) < \prod_{i = 1}^{\ell(v)}1\right\}
= 1,
\]
establishing the first assertion.

To prove the inequality for $\psi$, consider
\(\frac{\psi_{T_{n}}(1)}{n}\) and \(\frac{\psi_{T_{n}}(v^{*})}{n}\).
Again using Lemma~\ref{lemma: subtree limit sizes}, we have
\[
1
=
\lim_{n \to \infty}
\frac{1}{n} |(T_{n}, 1)_{v^{*} \downarrow}|
\leq 
\lim_{n \to \infty}
\frac{1}{n} \max_{v \in V(T_{n}) \setminus \{1\}} |(T_{n}, 1)_{v \downarrow}| = \frac{\psi_{T_n}(1)}{n}.
\]
On the other hand, note that
\[
\frac{\psi_{T_n}(v^*)}{n} = \lim_{n \to \infty} \frac{1}{n} \max_{v \in V(T_{n}) \setminus \{v^{*}\}} |(T_{n}, v^{*})_{v \downarrow}|
= 
\max \{B_{1}, \ldots, B_{D - 1}\},
\]
where 
\((B_{1}, \ldots, B_{D - 1})\) has a
\(\dirichlet\left(\frac{1}{D - 2}, \ldots, \frac{1}{D - 2}\right)\) 
distribution.
Since the maximum of the \(B_{i}\)'s is almost surely less than \(1\),
we see that
\[
\limsup_{n \to \infty}
\prob\left\{\frac{\psi_{T_{n}}(v^{*})}{n} < \frac{\psi_{T_{n}}(1)}{n}\right\}
= 1,
\]
which completes the proof.


\subsection{Proof of Proposition~\ref{prop: irregular confidence}}
\label{AppPropIrreg}

Note that when we examine the diffusion separately on either \(\GG_{d}\) or \(\GG_{D}\), it behaves exactly as it would on a \(d\)-regular or \(D\)-regular tree.
Additionally, we do not have to worry that the diffusion will be finite on either subtree, by Lemma \ref{lem: spread on both}.
Hence,
\begin{equation}
\label{EqnSubD}
\limsup_{n \to \infty} \prob\left\{1 \not \in H_{\phi, L}(T_{n, d}^{\circ}) | \; 1 \in T_{n, d}\right\} 
\leq 
7 \exp\left(-\frac{L}{2} \log \left(\min\left\{\frac{L(d - 2)}{4 e d^{2} \log(L)}, \frac{L}{2d^2}\right\}\right)\right),
\end{equation}
as in Corollary~\ref{CorPhiEst}, and the analogous statement holds for a diffusion starting in \(T_{n, D}\).
Using basic conditional probability, we then obtain
\begin{align*}
  &\begin{aligned}
\limsup_{n \to \infty} \prob\left\{1 \not \in H_{\phi, L}\right\}
&= 
\limsup_{n \to \infty} \prob\left\{1 \not \in H_{\phi, L}(T_{n, d}^{\circ}) | \; 1 \in T_{n, d}\right\} \prob\left\{1 \in T_{n, d}\right\} 
\\& \qquad 
+ \limsup_{n \to \infty} \prob\left\{1 \not \in H_{\phi, L}(T_{n, D}^{\circ}) | \; 1 \in T_{n, D}\right\} \prob\left\{1 \in T_{n, D}\right\} \\
& \le
7 \exp\left(-\frac{L}{2} \log \left(\min\left\{\frac{L(D - 2)}{4 e D^{2} \log(L)}, \frac{L}{2D^2}\right\}\right)\right),
  \end{aligned}
\end{align*}
where the inequality comes from the fact that 
\(\prob\left\{1 \in T_{n, d}\right\} + \prob\left\{1 \in T_{n, D}\right\} = 1\) and the bound~\eqref{EqnSubD} is larger for $D$ than for $d$. This completes the proof.


\subsection{Supporting Lemmas}
\label{AppSupporting}

This subsection contains additional results employed in the proofs derived earlier in this appendix.

\begin{lem}
Let \(\{T_{n}\}\) be a diffusion on \(\GG^{*}\), and suppose the diffusion reaches vertex \(u\) at time \(n\).
Let \(v\) be a neighbor of \(u\) such that \(v \not \in V(T_{n}).\)
Then the diffusion reaches vertex \(v\) almost surely.
\label{lemma: diffusion spread}
\end{lem}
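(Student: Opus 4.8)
**Proof proposal for Lemma (diffusion spread on $\GG^{*}$).**

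The plan is to show that the vertex $v$ is reached almost surely by exhibiting, conditional on the diffusion having reached $u$ at time $n$, an infinite sequence of independent opportunities for $v$ to be chosen, each with probability bounded below. Once $u \in V(T_n)$, the edge $\{u,v\}$ is one of the boundary edges of $T_m$ for every $m \ge n$ with $v \notin V(T_m)$; at each such step, vertex $m+1$ is chosen uniformly among the boundary vertices, so $v$ is selected with probability $1/|\partial T_m|$, where $\partial T_m$ denotes the set of candidate vertices (neighbors of $V(T_m)$ outside $V(T_m)$). The difficulty is that $|\partial T_m|$ grows with $m$, so a naive union/Borel--Cantelli argument over single steps fails: the per-step probabilities are summable in the wrong direction only if $|\partial T_m|$ grows slowly, but in fact $|\partial T_m| = \Theta(m)$, so $\sum_m 1/|\partial T_m| = \infty$, which is exactly what we want for a second--Borel--Cantelli--type conclusion — but the events are not independent.

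To handle the dependence, I would instead track just the relevant coordinate. Let $A_m$ be the event that $v \notin V(T_m)$, and on $A_m$ let $p_m$ be the number of boundary vertices of $T_m$; then $\prob\{v \in V(T_{m+1}) \mid \mathcal{F}_m,\, A_m\} = 1/p_m$. Since each new vertex added to $T_m$ contributes exactly $d-2$ or $D-2$ new boundary vertices (depending on which regular tree it lies in) and removes itself, we have $p_m \le (D-2)m + C$ for a constant $C$. Writing $q_m = \prob\{A_m\}$, the tower property gives $q_{m+1} \le q_m\bigl(1 - \frac{1}{(D-2)m+C}\bigr)$, hence
\[
q_m \le q_n \prod_{k=n}^{m-1}\left(1 - \frac{1}{(D-2)k + C}\right) \le \exp\left(-\sum_{k=n}^{m-1}\frac{1}{(D-2)k+C}\right) \longrightarrow 0
\]
as $m \to \infty$, because the harmonic-type series diverges. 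Therefore $\prob\{v \text{ is never reached}\} = \lim_{m\to\infty} q_m = 0$, which is the claim.

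The main obstacle is precisely the bound $p_m \le (D-2)m + C$: one must argue that the number of boundary vertices stays linear in $m$ (it cannot be larger, since each of the $m$ vertices of $T_m$ has bounded degree), so that the conditional inclusion probabilities $1/p_m$ are not summable and the product of their complements tends to $0$. This follows from the regularity of $\GG_d$ and $\GG_D$: a vertex in $\GG_d$ has $d$ neighbors and a vertex in $\GG_D$ has $D$ neighbors (with $v_*$, $v^*$ having one extra), so $p_m \le D \cdot m$ trivially, and the linear-growth bound is immediate. All remaining steps are the routine estimate on the divergent sum and the monotone-limit identity for $q_m$.
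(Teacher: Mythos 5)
Your proposal is correct and follows essentially the same route as the paper: bound the number of boundary vertices/edges of \(T_m\) linearly in \(m\) (the paper uses \(E_k \le (D+1)k\), you use \(p_m \le (D-2)m + C\)), so that the product of the conditional probabilities of missing \(v\) is at most \(\prod_k\bigl(1 - \tfrac{c}{k}\bigr) = 0\) by divergence of the harmonic series. Your write-up is, if anything, slightly more careful than the paper's, since you make the conditioning explicit via the tower property rather than writing the probability of never reaching \(v\) directly as an infinite product.
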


\begin{proof}
Let \(E_{k}\) denote the number of edges between \(V(T_{k})\) and \(V(\GG^{*} \setminus T_{k})\), and let \(A\) denote the event that \(v\) is not in \(V(T_{k})\), for any \(k\).
Then we have
\begin{align*}
  &\begin{aligned}
\prob(A) 
&= 
\prod_{k = n + 1}^{\infty} \frac{E_{k} - 1}{E_{k}} \\ 
&\leq 
\prod_{k = n + 1}^{\infty} \left(1 - \frac{1}{(D+1)k}\right) \\ 
&= \exp\left(\sum_{k=n+1}^\infty \log\left(1 - \frac{1}{(D+1)k}\right)\right) \\
& \le \exp\left(\sum_{k=n+1}^\infty - \frac{1}{(D+1)k}\right) \\
& = 0.
  \end{aligned}
\end{align*}
Note that the first inequality comes from the fact that the degree of every vertex is at most \(D + 1\), so $E_k \le (D+1)k$. This proves the lemma.
\end{proof}

\begin{lem}
Let \(\{T_{n}\}\) be a diffusion on \(\GG^{*}\), and let \(v\) be any vertex.
Then the diffusion reaches \(v\) almost surely.
\label{lemma: diffusion spread everywhere}
\end{lem}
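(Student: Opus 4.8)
The plan is to chain together applications of Lemma~\ref{lemma: diffusion spread} along the unique path in the tree $\GG^{*}$ from the source vertex $1$ to $v$. Write this path as $1 = u_{0}, u_{1}, \dots, u_{m} = v$, with $u_{i}$ and $u_{i+1}$ adjacent in $\GG^{*}$; such a path exists and is unique because $\GG^{*}$ is a tree. The claim is trivial when $v = 1$, so assume $m \geq 1$.

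I would prove by induction on $i$ that the diffusion reaches $u_{i}$ almost surely. The base case $i = 0$ is immediate, since $u_{0} = 1 \in V(T_{1})$. For the inductive step, suppose the diffusion reaches $u_{i}$ almost surely, and let $\sigma_{i}$ denote the (almost surely finite) first time at which $u_{i} \in V(T_{\sigma_{i}})$. On the event that $u_{i+1} \in V(T_{\sigma_{i}})$, the diffusion has already reached $u_{i+1}$. On the complementary event, $u_{i+1}$ is a neighbor of $u_{i}$ with $u_{i+1} \notin V(T_{\sigma_{i}})$, so conditioning on $\mathcal{F}_{\sigma_{i}}$ and restarting the diffusion at time $\sigma_{i}$, Lemma~\ref{lemma: diffusion spread} gives that the diffusion reaches $u_{i+1}$ almost surely. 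Combining the two cases and integrating over the distribution of $\sigma_{i}$, the diffusion reaches $u_{i+1}$ almost surely, completing the induction. Since the path is finite, a finite intersection of almost sure events is almost sure, so the diffusion reaches $v = u_{m}$ almost surely; if one wants the conclusion for all vertices at once, one applies countable subadditivity over the countably many vertices of $\GG^{*}$.

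There is no deep obstacle here; the only point needing a little care is passing from the statement of Lemma~\ref{lemma: diffusion spread}, which is phrased for a deterministic hitting time $n$, to the random hitting time $\sigma_{i}$ of $u_{i}$. This is handled by conditioning on $\mathcal{F}_{\sigma_{i}}$, using that $\sigma_{i}$ is a stopping time and that the diffusion restarted at $\sigma_{i}$ is again a diffusion on $\GG^{*}$, together with the observation that the geometric-series bound $\prod_{k > n}\bigl(1 - \tfrac{1}{(D+1)k}\bigr) = 0$ in the proof of Lemma~\ref{lemma: diffusion spread} holds uniformly in the starting index $n$, so the conclusion survives averaging over $\sigma_{i}$.
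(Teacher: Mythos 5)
Your proof is correct and follows essentially the same route as the paper: chaining Lemma~\ref{lemma: diffusion spread} along the unique path from the source to $v$, with one application per edge. The paper phrases the chaining as a product of conditional probabilities $\prod_i \prob(R_i \mid R_{i-1}) = 1$ rather than as an explicit induction with stopping times, so your treatment of the random hitting time $\sigma_i$ is simply a more careful rendering of the same step the paper invokes implicitly.
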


\begin{proof}
If \(v\) is the source, we are done. 
So suppose that \(v\) is not the source, and let $(v_{0}, v_{1}, \ldots, v_{k - 1}, v_{k})$ denote a path from the source to \(v\).
Let \(R_{i}\) be the event that the diffusion reaches \(v_{i}\).
Then we have
\[
\prob\left(R_{k}\right)
= 
\prob\left(R_{k} | R_{k - 1}\right) \prob\left(R_{k - 1}\right)
= 
\prod_{i = 1}^{k} \prob\left(R_{i} | R_{i - 1}\right).
\]

By Lemma \ref{lemma: diffusion spread},
each of the terms in the product is \(1\).
Thus, the diffusion reaches \(v\) almost surely.
\end{proof}

\begin{lem}
Let \(\{T_{n}\}\) be a diffusion on \(\GG^{*}\). 
Let \(u\) be a vertex in \(\GG_{d}\), and suppose the diffusion has reached \(u\).
If \(v\) lies on the path between \(u\) and \(v^{*}\), then \(\lim_{n \to \infty} 
\frac{|(T_{n}, u)_{v \downarrow}|}{n}
= 
1.
\)
If \(v\) is in \(\GG_{d}\) but does not lie on the path between \(u\) and \(v^{*}\),
then 
\( 
\lim_{n \to \infty} 
\frac{|(T_{n}, u)_{v \downarrow}|}{n}
= 
0.
\)
\label{lemma: subtree limit sizes}
\end{lem}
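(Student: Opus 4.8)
The plan is to reduce the whole statement to the single claim that $|T_{n,d}|/n \to 0$ almost surely (equivalently $|T_{n,D}|/n \to 1$), where $T_{n,d}$ and $T_{n,D}$ denote the parts of the diffusion lying in $\GG_d$ and $\GG_D$, and then to read off both assertions from the tree structure. Granting this claim: since the diffusion reaches every vertex almost surely (Lemma~\ref{lemma: diffusion spread everywhere}), for all large $n$ both $u$ and $v$ lie in $V(T_n)$, so the subtrees in question are eventually well-defined. If $v$ lies on the path from $u$ to $v^*$, then every infected vertex of $\GG_D$ is a descendant of $v$ in $(T_n, u)$, because its path to $u$ must cross the bridge $v_* v^*$ and hence pass through $v$; therefore $|T_{n,D}| \le |(T_n, u)_{v\downarrow}| \le n$, and letting $n \to \infty$ forces $|(T_n, u)_{v\downarrow}|/n \to 1$. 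If instead $v \in \GG_d$ is \emph{not} on this path, then no descendant of $v$ can lie in $\GG_D$ (such a vertex would have to route through $v_*$ on its way to $u$, which would put $v$ on the path from $u$ to $v^*$), so $(T_n, u)_{v\downarrow} \subseteq T_{n,d}$ and $|(T_n, u)_{v\downarrow}|/n \le |T_{n,d}|/n \to 0$.

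\textbf{Establishing the claim.} To prove $|T_{n,d}|/n \to 0$, I would first let $\tau$ be the stopping time at which both endpoints of the bridge have been infected; this is almost surely finite by Lemma~\ref{lemma: diffusion spread everywhere}. For $n \ge \tau$ the bridge edge is internal, so every boundary edge of $T_n$ lies entirely within $\GG_d$ or entirely within $\GG_D$, and every vertex of $\GG_d$ infected after time $\tau$ has degree exactly $d$ (the unique irregular vertex $v_*$ is already infected), and symmetrically on the $\GG_D$ side. Since $\GG^*$ is a tree, a newly infected vertex of degree $\delta$ destroys one boundary edge and creates $\delta - 1$ of them, a net change of $\delta - 2$; hence, writing $E_{n,d}$ and $E_{n,D}$ for the numbers of boundary edges on the two sides, the pair $(E_{n,d}, E_{n,D})$ evolves for $n \ge \tau$ as a two-colour generalized P\'olya urn with diagonal replacement matrix $\operatorname{diag}(d-2,\, D-2)$ --- the same type of urn underlying Lemmas~\ref{LemDirichlet} and~\ref{LemBetaConv}, but now with \emph{unequal} increments. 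Because $d - 2 < D - 2$, the colour with the larger increment dominates: $E_{n,D} = \Theta(n)$ and $E_{n,d} = o(n)$ almost surely. Finally, for $n \ge \tau$ one has $E_{n,d} = (d-2)|T_{n,d}|$ up to an additive constant determined by the (finite) configuration at time $\tau$, so $|T_{n,d}|/n \to 0$ almost surely, as claimed.

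\textbf{The main obstacle.} Everything above is bookkeeping except the urn asymptotics $E_{n,d} = o(n)$, which is \emph{not} covered by the Dirichlet-limit results used elsewhere in the paper (those treat replacement matrices of the form $cI_k$). I expect this to be the crux, and I would supply it in one of two standard ways. First, via an Athreya--Karlin embedding: running the urn in continuous time turns the $\GG_d$-edges into a Markov branching process in which each particle is replaced by $d-1$ particles at rate $1$, independently of the analogous $\GG_D$-process (rate $1$, $D-1$ offspring); their populations grow like $W_d \econst^{(d-2)t}$ and $W_D \econst^{(D-2)t}$ with $W_d, W_D > 0$ almost surely, so the ratio tends to $0$, and evaluating at the (a.s.\ increasing) jump times --- where the total ball count is $\Theta(n)$ --- yields $E_{n,d}/n \to 0$. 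Alternatively, directly: for $n$ large, $r_n := E_{n,d}/E_{n,D}$ is a nonnegative supermartingale, since a short computation gives $\expect[r_{n+1} - r_n \mid \ff_n] \le -c\, r_n / E_n$ once $E_{n,D}$ is large (the $\GG_D$-side is then selected with probability tending to $1$); thus $r_n$ converges almost surely to some $r_\infty \ge 0$, and $r_\infty > 0$ would make $\sum_n c\, r_n / E_n$ diverge (as $E_n = \Theta(n)$), contradicting the convergence of a bounded-below supermartingale --- hence $r_\infty = 0$ and $E_{n,d}/n \to 0$.
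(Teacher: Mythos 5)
Your proposal is correct and follows essentially the same route as the paper: both reduce the two assertions to the single fact that the infected portion of \(\GG_D\) asymptotically dominates (via the same subtree-containment observations for \(v\) on or off the path to \(v^{*}\)), and both track the boundary-edge counts \((E_{n,d}, E_{n,D})\) as a two-colour P\'{o}lya urn with replacement matrix \(\mathrm{diag}(d-2,\, D-2)\). The only real difference is that where you supply a proof of the dominance \(E_{n,d}/n \to 0\) (via the Athreya--Karlin branching-process embedding or the supermartingale argument), the paper simply cites Corollary 1 of Athreya (1969) for the almost-sure convergence of the colour proportions to \(0\) and \(1\).
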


\begin{proof}
By hypothesis, there exists \(j\) such that \(u\) is in \(T_{j}\).
By Lemma \ref{lemma: diffusion spread everywhere}, there exists \(k\) such that 
\(v^{*}\) is in \(T_{k}\).
Let \(N = \max(j, k)\).

At time \(N\), define \(E_{n}^{d}\) to be the number of edges between \(T_{n}\) and \(\GG_{d} \setminus T_{n}\), 
and define \(E_{n}^{D}\) to be the number of edges between \(T_{n}\) and \(\GG_{D} \setminus T_{n}\).
Starting with \(n = N\), the pair
\((E_{n}^{d}, E_{n}^{D})\)
evolves according to a P\'{o}lya urn.
However, the replacement matrix in this case is diagonal with entries \(d - 2\) and \(D - 2\), respectively.
By Corollary 1 of Athreya~\cite{athreya1969},
the random variables
\(\frac{E_{n}^{D}}{E_{n}^{D} + E_{n}^{d}}\)
and 
\(\frac{E_{n}^{d}}{E_{n}^{D} + E_{n}^{d}}\)
converge to \(1\) and \(0\), respectively.
Furthermore, note that
\[
E_{n}^{D}
=
(D - 2) |(T_{n}, u)_{v^{*} \downarrow}| + 2,
\]
and 
\[
E_{n}^{d}
=
(d - 2)(n - |(T_{n}, u)_{v^{*} \downarrow}|) + 2.
\]
In particular, $\frac{E_n^D - E_n^d}{E_n^D + E_n^d} \rightarrow 1$ implies that
\begin{equation*}
\frac{(D+d-4)\frac{|(T_n, u)_{v^* \downarrow}|}{n} - (d-2)}{(D-d) \frac{|(T_n, u)_{v^* \downarrow}|}{n} + (d-2) + \frac{4}{n}} \longrightarrow 1,
\end{equation*}
from which we may conclude that $\lim_{n \rightarrow \infty} \frac{|(T_n, u)_{v^* \downarrow}|}{n} = 1$.
For \(v\) on the path between \(u\) and \(v^{*}\), we see that
\(|(T_{n}, u)_{v^{*} \downarrow}| \leq |(T_{n}, u)_{v \downarrow}|,\)
since the subtree rooted at \(v\) contains the subtree rooted at \(v^{*}\).
If \(v\) is not on the path between \(u\) and \(v^{*}\), we have
\[
\lim_{n \to \infty}
\frac{|(T_{n}, u)_{v \downarrow}|}{n}
\leq 
\lim_{n \to \infty} 
\frac{n - |(T_{n}, u)_{v^{*} \downarrow}| }{n}
= 0,
\]
which completes the proof.
\end{proof}


\begin{lem}
Let \(\{T_{n}\}\) be a diffusion on \(\GG^{*}\).
Let \(T_{n, d}\) and \(T_{n, D}\) be the diffusion subtrees on \(\GG_{d}\) and \(\GG_{D}\), respectively.
Then
\[
\lim_{n \to \infty} |T_{n, d}| 
= 
\lim_{n \to \infty} |T_{n, D}|
= 
\infty,
\]
almost surely.
\label{lem: spread on both}
\end{lem}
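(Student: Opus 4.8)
The plan is to deduce the statement directly from Lemma~\ref{lemma: diffusion spread everywhere}, which already guarantees that every individual vertex of $\GG^*$ is reached almost surely; the only work left is to upgrade this from one vertex to all of $V(\GG_d)$ (resp.\ $V(\GG_D)$) simultaneously and then to conclude that the subtree sizes diverge. Since $\GG_d$ is a $d$-regular infinite tree, $V(\GG_d)$ is countably infinite, so I would fix an enumeration $v_1, v_2, \dots$ of it. For each $i$, Lemma~\ref{lemma: diffusion spread everywhere} gives $\prob\{\exists n : v_i \in V(T_n)\} = 1$.

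Next I would form the event $A_d = \bigcap_{i \ge 1} \{\exists n : v_i \in V(T_n)\}$ and note $\prob(A_d) = 1$, since by countable subadditivity $\prob(A_d^c) \le \sum_{i \ge 1} \prob\{v_i \notin V(T_n)\ \text{for all } n\} = 0$. On $A_d$, every vertex of $\GG_d$ eventually joins the diffusion, so for each $M$ there is a finite time $n_M$ (the maximum of the hitting times of $v_1, \dots, v_M$) with $v_1, \dots, v_M \in V(T_{n_M}) \cap V(\GG_d)$, hence $|T_{n_M, d}| \ge M$. Because $T_{n,d} \subseteq T_{n+1,d}$ for all $n$ (vertices are only ever added to the diffusion), the sequence $|T_{n,d}|$ is nondecreasing in $n$, so $\lim_{n \to \infty} |T_{n,d}| = \infty$ on $A_d$. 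An identical argument applied to $\GG_D$ yields an event $A_D$ with $\prob(A_D) = 1$ on which $\lim_{n \to \infty} |T_{n,D}| = \infty$, and since $\prob(A_d \cap A_D) = 1$, both limits are infinite almost surely.

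I do not anticipate a serious obstacle: the substantive content has already been extracted in Lemmas~\ref{lemma: diffusion spread} and~\ref{lemma: diffusion spread everywhere}. The only two points needing a word of care are the reduction from ``each vertex eventually appears'' to ``$|T_{n,d}| \to \infty$,'' which relies on the monotonicity of $n \mapsto |T_{n,d}|$, and the passage from a single vertex to countably many, which is a routine countable-union estimate.
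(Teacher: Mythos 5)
Your proposal is correct and follows essentially the same route as the paper: both arguments reduce the claim to Lemma~\ref{lemma: diffusion spread everywhere} via a union bound over vertices of $\GG_{d}$ (resp.\ $\GG_{D}$) and then invoke monotonicity of $n \mapsto |T_{n,d}|$. The only cosmetic difference is that you take a countable intersection over all vertices at once, whereas the paper fixes an arbitrary finite set of $M$ vertices and lets $M$ grow; your version is, if anything, slightly more explicit about the almost-sure event on which the limit diverges.
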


\begin{proof}
We  prove the claim for \(|T_{n, d}|\), since the proof for \(|T_{n, D}|\) is  identical.
Let \(M\) be a positive integer.
Pick a set of vertices \(S_{M}^{d} = \{v_{1}, \ldots, v_{M}\}\)  in \(\GG_{d}\)  of size \(M\).
If we let \(A_{i}\) be the event that the diffusion fails to reach \(v_{i}\), we have
\[
\prob\left(\bigcup_{i = 1}^{M} A_{i}\right)
\leq 
\sum_{i = 1}^{M} \prob\left(A_{i}\right)
= 0,
\]
by Lemma \ref{lemma: diffusion spread everywhere}.
Thus, there exists an \(N\) such that \(|T_{N, d}| \geq M\), almost surely.
\end{proof}


\section{Auxiliary Lemmas}
\label{AppAuxiliary}

This appendix contains further technical results used in the proofs of the paper.

The following lemma is a stochastic domination result for beta random variables:
\begin{lem}
Let \(B\) and \(B'\) be  \(\betavar\left(\frac{1}{d - 2}, \frac{d - 1}{d - 2}\right)\) and \(\betavar\left(\frac{1}{d - 2}, 1\right)\) random variables, respectively.
Then \(B'\) stochastically dominates \(B\); i.e.,
\[
\prob\left\{B \geq t\right\}
\leq 
\prob\left\{B' \geq t\right\},
\]
for any \(t\).
\label{lemma: beta dominance}
\end{lem}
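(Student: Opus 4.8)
The plan is to produce an explicit coupling of $B$ and $B'$ on a common probability space under which $B \le B'$ holds surely; stochastic domination is then immediate. The tool is the Beta--Gamma representation together with the additivity of the Gamma family: writing $\mathrm{Gamma}(a)$ for the Gamma law with shape $a$ and unit rate, if $X \sim \mathrm{Gamma}(a)$ and $Y \sim \mathrm{Gamma}(b)$ are independent, then $\tfrac{X}{X+Y} \sim \betavar(a,b)$, and moreover $X+Y \sim \mathrm{Gamma}(a+b)$.

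Concretely, I would take $U \sim \mathrm{Gamma}\!\left(\tfrac{1}{d-2}\right)$, $V \sim \mathrm{Gamma}(1)$, and $W \sim \mathrm{Gamma}\!\left(\tfrac{1}{d-2}\right)$ mutually independent; the shape $\tfrac{1}{d-2}$ is positive since $d \ge 3$. Then $\tfrac{U}{U+V}$ has the law of $B'$, while $V + W \sim \mathrm{Gamma}\!\left(1 + \tfrac{1}{d-2}\right) = \mathrm{Gamma}\!\left(\tfrac{d-1}{d-2}\right)$, so $\tfrac{U}{U+V+W}$ has the law of $B$. Because $W \ge 0$ almost surely, $\tfrac{U}{U+V+W} \le \tfrac{U}{U+V}$ surely, and hence for every $t$,
\[
\prob\left\{B \ge t\right\} = \prob\left\{\frac{U}{U+V+W} \ge t\right\} \le \prob\left\{\frac{U}{U+V} \ge t\right\} = \prob\left\{B' \ge t\right\},
\]
which is exactly the asserted domination.

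There is essentially no obstacle: the only things to state carefully are the two standard facts invoked (the Beta--Gamma identity, and that a sum of independent Gammas with shapes $a,b$ has shape $a+b$), and the trivial matching of parameters $\tfrac{d-1}{d-2} = 1 + \tfrac{1}{d-2}$. As an alternative I could avoid the coupling and use a monotone likelihood ratio argument: on $(0,1)$ the ratio of the densities of $B'$ and $B$ is a constant times $(1-x)^{-1/(d-2)}$, which is nondecreasing in $x$, so a single-crossing argument applied to $\int_0^t (f_{B'} - f_B)$ gives $\prob\{B' \le t\} \le \prob\{B \le t\}$ for all $t$. I would present the coupling version as the main proof, since it is the shortest and most transparent, and mention the likelihood-ratio route only in passing.
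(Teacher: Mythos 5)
Your proof is correct, and it takes a genuinely different route from the paper's. You build a one-shot coupling out of the Beta--Gamma algebra: with independent $U \sim \mathrm{Gamma}\left(\frac{1}{d-2}\right)$, $V \sim \mathrm{Gamma}(1)$, $W \sim \mathrm{Gamma}\left(\frac{1}{d-2}\right)$, the variables $\frac{U}{U+V}$ and $\frac{U}{U+V+W}$ have exactly the laws of $B'$ and $B$ (using $1 + \frac{1}{d-2} = \frac{d-1}{d-2}$ and the independence of $U$ and $V+W$), and the pointwise inequality is immediate from $W \ge 0$. The paper instead couples two P\'{o}lya urns with initial compositions $(1, d-1)$ and $(1, d-2)$ and common replacement matrix $(d-2)I_2$, drives both with the same sequence of uniforms, proves $F_n^{(d-1)} \le F_n^{(d-2)}$ by induction on $n$, and then passes to the limit to transfer the comparison to the limiting Beta variables. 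Your argument buys brevity and exactness: no induction, no limiting step, no case analysis on where the uniform falls; it also makes the parameter bookkeeping transparent. The paper's urn coupling buys thematic coherence, since the Beta variables in Lemma~\ref{LemBetaConv} arise precisely as P\'{o}lya urn limits, so the domination is established inside the same machinery already in play; but as a proof of the lemma in isolation, yours is at least as rigorous and shorter. Your monotone likelihood ratio remark is also a valid third route, since $f_{B'}/f_B \propto (1-x)^{-1/(d-2)}$ is increasing on $(0,1)$.
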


\begin{proof}
Consider two P\'{o}lya urns.
Let the contents of the first and second urns be \((C_{n}^{(d - 1)}, D_{n}^{(d - 1)})\) and \((C_{n}^{(d - 2)}, D_{n}^{(d - 2)})\), respectively, after the \(n^\text{th}\) draw.
The initial conditions are \((1, d - 1)\) and \((1, d - 2)\), respectively,
and the replacement matrix is \((d - 2)I_{2}\) for both urns.

We only need to show that
\begin{equation}
\prob\left\{\frac{1}{d + n(d - 2)}C_{n}^{(d - 1)} \geq t\right\}
\leq
\prob\left\{\frac{1}{d - 1 + n(d - 2)} C_{n}^{(d - 2)} \geq t\right\},
\label{eqn: stoch dom condition}
\end{equation}
since taking the limit as \(n \to \infty\) would give
\[
\prob\left\{B \geq t\right\}
\leq 
\prob\left\{B' \geq t\right\}.
\]

We can prove equation~\eqref{eqn: stoch dom condition} by a coupling argument over a suitable probability space.
Let \(U_{1}, \ldots, U_{n}\) be i.i.d.\ \(\text{Uniform}[0, 1]\) random variables.
Define \(F_{0}^{(d - 1)} = F_{0}^{(d -2)} = 1\), and \(F_{i}^{(d - 1)}\) and \(F_{i}^{(d - 2)}\), for \(i > 0\), to be
\[
F_{i}^{(j)}
= 
\begin{cases}
F_{i - 1}^{(j)} + d - 2, 
& \text{if } U_{i} \leq \frac{F_{i - 1}^{(j)}}{1 + j + (i - 1)(d - 2)}, \\ 
F_{i - 1}^{(j)},
& \text{if } U_{i} >\frac{F_{i - 1}^{(j)}}{1 + j + (i - 1)(d - 2)}.
\end{cases}
\]
By design, we have \(\prob\left\{F_{i}^{(j)} \geq t\right\} = \prob\left\{B_{i}^{(j)} \geq t\right\}\)
for both \(j = d - 1\) and \(j = d - 2\), so it suffices to show that \(F_{n}^{(d - 1)} \leq F_{n}^{(d - 2)}\), for all \(n\).
This is simple to show by induction.
The inequality is true for \(n = 0\).
Now suppose \(F_{i}^{(d - 1)} \leq F_{i}^{(d - 2)}\) for all \(i < I\), where $I \ge 1$.
Then \(U_{I}\) is 
in either
\(\left[0, \;  \frac{F_{I - 1}^{(d - 1)}}{d + (I - 1)(d - 2)}\right],\)
in 
\(\left(\frac{F_{I - 1}^{(d - 1)}}{d + (I - 1)(d - 2)}, \; \frac{F_{I - 1}^{(d - 2)}}{d - 1 + (I - 1)(d - 2)}\right] ,\)
or in
\(\left(\frac{F_{I - 1}^{(d - 2)}}{d - 1 + (I - 1)(d - 2)}, \;  1\right].\)
In the first case, we have
\[
F_{I}^{(d - 1)}
= 
F_{I - 1}^{(d-1)} + d - 2
\leq 
F_{I - 1}^{(d - 2)} + d - 2
= 
F_{I}^{(d - 2)}.
\]
In the second case, we have
\[
F_{I}^{(d - 1)}
= 
F_{I - 1}^{(d-1)}
<
F_{I - 1}^{(d - 2)} + d - 2
= 
F_{I}^{(d - 2)}.
\]
In the third case, we have
\[
F_{I}^{(d - 1)}
= 
F_{I - 1}^{(d-1)}
\leq 
F_{I - 1}^{(d - 2)}
= 
F_{I}^{(d - 2)}.
\]
This proves that \(F_{n}^{(d - 1)} \leq F_{n}^{(d - 2)}\) for all \(n\),
which completes the proof of the lemma.
\end{proof}

We also have the following lemma concerning concentration of a function of beta random variables:

\begin{lem}
Let 
\(B = \prod_{i = 1}^{\infty}  \min\left(-\sum_{k = 1}^{i} \log(B_{k}), \; 1\right)\),
where the 
\(B_{k}\)'s
are i.i.d.\ \(\betavar\left(\frac{1}{d - 2}, \; 1\right)\)
random variables.
Then
\[
\prob\left\{
B \leq s \right\}
\leq 
6 s^{\frac{1}{4}},
\]
for all $s > 0$.
\label{lemma: beta bound}
\end{lem}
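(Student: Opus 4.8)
The plan is to reduce the tail bound to a single moment estimate and then invoke Markov's inequality. First I would reparametrize: setting $Y_k := -\log B_k$, a one-line density computation shows that a $\betavar\left(\frac{1}{d-2},1\right)$ variable $B_k$ has $\prob\{-\log B_k > y\} = e^{-ay}$ for $y \ge 0$, where $a := \frac{1}{d-2}$, so $Y_k$ is $\mathrm{Exponential}(a)$ and the partial sums $S_i := -\sum_{k=1}^i \log B_k = \sum_{k=1}^i Y_k$ are increasing with $S_i \to \infty$ almost surely. Consequently $B = \prod_{i=1}^\infty \min(S_i,1)$ has only finitely many factors different from $1$; letting $T := \max\{i : S_i < 1\}$ (with $T := 0$ if $S_1 \ge 1$), we get $B = \prod_{i=1}^{T} S_i \in (0,1]$ almost surely, in particular $B > 0$ so that $B^{-1/4}$ is a well-defined nonnegative random variable.

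The \emph{key claim} is that $\expect\big[B^{-1/4}\big] \le e^{4/3} < 6$. Granting this, Markov's inequality gives, for every $s > 0$,
\[
\prob\{B \le s\} = \prob\big\{B^{-1/4} \ge s^{-1/4}\big\} \le s^{1/4}\,\expect\big[B^{-1/4}\big] \le 6\, s^{1/4},
\]
which is the lemma (the exponent $\tfrac14$ plays no special role here — any exponent in $(0,1)$ works, with a correspondingly different constant, and $\tfrac14$ is presumably chosen to match what is needed in Lemma~\ref{LemFirst}). To establish the claim I would condition on $T$:
\begin{multline*}
\expect\big[B^{-1/4}\big] = \prob\{T = 0\} + \sum_{k = 1}^{\infty} \expect\Big[\mathbf{1}\{S_k < 1 \le S_{k+1}\} \prod_{i=1}^{k} S_i^{-1/4}\Big] \\
\le 1 + \sum_{k=1}^{\infty} \expect\Big[\mathbf{1}\{S_k < 1\} \prod_{i=1}^{k} S_i^{-1/4}\Big].
\end{multline*}
Using that $(S_1,\dots,S_k)$ has joint density $a^k e^{-a s_k}$ on $\{0 < s_1 < \dots < s_k\}$, bounding $e^{-a s_k} \le 1$, and then exploiting the permutation-symmetry of the integrand $\prod_i s_i^{-1/4}$ to replace the ordered region by the full cube $[0,1]^k$ at the cost of a factor $\frac{1}{k!}$, the $k$-th summand is at most $\frac{a^k}{k!}\big(\int_0^1 s^{-1/4}\,ds\big)^k = \frac{1}{k!}\big(\frac{4a}{3}\big)^k$. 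Summing the series yields $\expect\big[B^{-1/4}\big] \le 1 + \big(e^{4a/3}-1\big) = e^{4a/3} \le e^{4/3}$, the last inequality using $a = \frac{1}{d-2}\le 1$ because $d \ge 3$.

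The only step requiring care is the passage from the ordered simplex to the cube: one must check that $\prod_{i=1}^k s_i^{-1/4}$ is integrable over $[0,1]^k$ (it is, since $\int_0^1 s^{-1/4}\,ds = \tfrac43 < \infty$) and that both the region $\{0 < s_i < 1 \text{ for all } i\}$ and the integrand are invariant under permuting coordinates, so that the cube decomposes, up to a null set, into $k!$ congruent pieces of equal integral. Everything else — identifying $Y_k$ as exponential, writing down the density of the partial sums, dropping the factor $e^{-a s_k}$, and the Markov step — is routine, so this small amount of measure-theoretic bookkeeping, together with checking that $e^{4/3} < 6$, is the only place where any attention is needed.
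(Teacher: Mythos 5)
Your proof is correct, but it takes a genuinely different route from the paper's. The paper (following Lemma 2 of Bubeck et al.) decomposes over the stopping index $j$ with $S_j \le 1 < S_{j+1}$, bounds $\prob\{S_j \le 1\}$ by the Gamma tail $\frac{(d-2)^{-j}}{j!}$, bounds $\prob\{S_{j+1} > 1 \text{ and } \prod_{i \le j} S_i \le s\}$ by $2^j \sqrt{s}$ via the representation of $\left(S_i / S_{j+1}\right)_{i \le j}$ as uniform order statistics plus a Markov bound on $\prod_i U_i^{-1/2}$, and then combines the two estimates with $\min(a,b) \le \sqrt{ab}$ to arrive at $6 s^{1/4}$. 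You instead bound the single moment $\expect\left[B^{-1/4}\right]$ directly, integrating $\prod_i s_i^{-1/4}$ against the joint density $a^k e^{-a s_k}$ of the exponential partial sums over the simplex $\{0 < s_1 < \cdots < s_k < 1\}$ and symmetrizing to the cube at the cost of $\frac{1}{k!}$, then apply Markov's inequality once; your symmetrization plays the role of the paper's order-statistics step, and the $\frac{1}{k!}$ plays the role of its Gamma-tail factor. Your route buys a cleaner argument (no $\min(a,b) \le \sqrt{ab}$ splitting), a slightly better constant $e^{4/(3(d-2))} \le e^{4/3} < 6$ (so the stated bound follows a fortiori), and the explicit observation that the exponent $\tfrac14$ is not special; the steps you flag for care — positivity and a.s.\ finiteness of the product defining $B$, the density of $(S_1, \dots, S_k)$, Tonelli for the sum over $k$, and the simplex-to-cube symmetrization — all check out.
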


\begin{proof}
The proof is essentially the same as the proof of Lemma 2 in Bubeck et al.~\cite{bubeck},
but we include it here for completeness.
For brevity, let \(P\) denote the probability we wish to bound.
First note that \(-\log(B_{k})\) is an exponential random variable with parameter \(\frac{1}{d - 2}\).
This can be seen easily by computing
\begin{align*}
  &\begin{aligned}
\prob\left\{-\log(B_{k}) \leq s\right\} 
&= 
\prob\left\{B_{k} \geq e^{-s}\right\} \\ 
&= 
\frac{1}{d - 2} \int_{e^{-s}}^{1} x^{\frac{1}{d - 2} - 1} \; dx \\ 
&= 
1 - e^{-\frac{s}{d - 2}}.
  \end{aligned}
\end{align*}
Also observe that \(-\sum_{k = 1}^{\infty} \log(B_{k}) > 1\), almost surely.
Thus, for some \(j\), we see that we have
\(-\sum_{k = 1}^{j} \log(B_{k}) \leq 1\) and
\(-\sum_{k = 1}^{j + 1} \log(B_{k}) > 1\).
Hence,
\begin{align}
& P
= 
\prob\left\{\exists j: -\sum_{k = 1}^{j }\log(B_{k}) \leq 1,
\text{ and }  -\sum_{k = 1}^{j + 1} \log(B_{k}) > 1,
 \text{ and }  \prod_{i = 1}^{j}\left(-\sum_{k = 1}^{i} \log(B_{k})\right) \leq s
\right\} \notag  \\ 
& \; \leq 
\sum_{j = 1}^{\infty} \min\left(
\prob\left\{-\sum_{k = 1}^{j}\log(B_{k}) \leq 1 \right\}, 
\prob\left\{-\sum_{k = 1}^{j + 1} \log(B_{k}) > 1
 \text{ and }  \prod_{i = 1}^{j}\left(-\sum_{k = 1}^{i} \log(B_{k})\right) \leq s\right\}
\right).
\label{eqn: min probs}
\end{align}
We now analyze the probability of each event appearing in inequality~\eqref{eqn: min probs}.
It is easy to see  that 
\(-\sum_{k = 1}^{j }\log(B_{k})\) has a \(\text{Gamma}\left(j, \frac{1}{d - 2}\right)\) distribution,
by considering characteristic functions.
Thus,
\begin{align}
\label{EqnB1}
  &\begin{aligned}
\prob\left\{-\sum_{k = 1}^{j}\log(B_{k}) \leq 1, \right\} 
&= 
\frac{(d - 2)^{-j}}{j!} \int_{0}^{1} x^{j - 1} e^{-\frac{x}{d - 2}} \; dx \\ 
&\leq 
\frac{(d - 2)^{-j}}{(j - 1)!} \int_{0}^{1} x^{j - 1} \; dx \\ 
&=
\frac{(d - 2)^{-j}}{j!}.
  \end{aligned}
\end{align}
To analyze the second quantity in inequality~\eqref{eqn: min probs}, we write
\[
\prod_{i = 1}^{j}\left(-\sum_{k = 1}^{i} \log(B_{k})\right)
= 
-\sum_{\ell = 1}^{j + 1} \log(B_{k})
\prod_{i = 1}^{j} \left(\frac{-\sum_{k = 1}^{i} \log(B_{k})}{-\sum_{\ell = 1}^{j + 1} \log(B_{k})} \right).
\]
By properties of exponential random variables, the vector 
\(\left(-\sum_{k = 1}^{i} \log(B_{k}) / (-\sum_{\ell = 1}^{j + 1} \log(B_{\ell}) )\right)_{i=1, \dots, j}\)
is equal in distribution to the vector \((U_{(i)})_{i = 1, \ldots, j}\)  where 
\(U_{1}, \ldots, U_{j}\) are i.i.d.\ 
\(\text{Uniform}[0, 1]\) random variables.
Thus, we have
\begin{align}
\label{EqnB2}
  &\begin{aligned}
 \prob\left\{-\sum_{k = 1}^{j + 1} \log(B_{k}) > 1
 \text{ and }  \prod_{i = 1}^{j}\left(-\sum_{k = 1}^{i} \log(B_{k})\right) \leq s\right\}
&\leq 
\prob\left\{\prod_{i = 1}^{j} U_{i} \leq s\right\} \\ 
&\leq 
\expect\left[\frac{\sqrt{s}}{\prod_{i = 1}^{j} \sqrt{U_{i}}}\right] \\ 
& \leq 
2^{j} \sqrt{s},
  \end{aligned}
\end{align}
where the second inequality follows from Markov's inequality. Applying the fact \(\min(a, b) \leq \sqrt{ab}\) to 
inequality~\eqref{eqn: min probs} and using the bounds~\eqref{EqnB1} and~\eqref{EqnB2}, we then obtain
\begin{align*}
  &\begin{aligned}
P 
&\leq 
\sum_{j = 1}^{\infty} \sqrt{\left(\frac{2}{d - 2}\right)^{j} \frac{\sqrt{s}}{j!}} \\ 
&\leq 
6 s^{\frac{1}{4}},
  \end{aligned}
\end{align*}
which completes the proof of the lemma.
\end{proof}

\end{appendices}

\end{document}